\let\csname equation*\endcsname\relax
\let\csname endequation*\endcsname\relax
\begin{document}
\newtheorem{myexample}{Example}
\newtheorem{myDef}{Definition}
\newtheorem{myTheo}{Theorem}
\newtheorem{myLem}{Lemma}
\newtheorem{myCor}{Corollary}
\newtheorem{myCase}{Case}
\newtheorem*{mypro}{Proof}
\newtheorem{myrem}{Remark}
\newtheorem{definition}{Definition}
\newtheorem{lemma}{Lemma}
\newtheorem{remark}{Remark}
\newtheorem{theorem}{Theorem}
\newtheorem{proposition}{Proposition}
\newtheorem{assumption}{Assumption}
\newtheorem{example}{Example}
\newtheorem{corollary}{Corollary}
\def\ep{\varepsilon}
\def\Rn{\mathbb{R}^{n}}
\def\Rm{\mathbb{R}^{m}}
\def\E{\mathbb{E}}
\def\hte{\hat\theta}

\renewcommand{\theequation}{\thesection.\arabic{equation}}

\title[Characterization of the Most Probable Transition Paths] {Characterization of the Most Probable Transition Paths of Stochastic Dynamical Systems with Stable L\'{e}vy Noise}

\author{Yuanfei Huang$^1$, Ying Chao$^1$, Shenglan Yuan$^1$, Jinqiao Duan$^{2,*}$}

\address{$^1$School of Mathematics and Statistics \& Center for Mathematical Sciences \& Hubei Key Laboratory of Engineering Modeling and Scientific Computing, Huazhong University of Science and Technology, Wuhan 430074,  China}
\address{$^2$Department of Applied Mathematics, Illinois Institute of Technology, Chicago, IL 60616, USA}
\address{$^*$Author to whom correspondence should be addressed: duan@iit.edu}

\ead{yfhuang@hust.edu.cn, yingchao1993@hust.edu.cn, shanglanyuan@hust.edu.cn, duan@iit.edu.}

\begin{abstract}
This work is devoted to the investigation of the most probable transition path for stochastic dynamical systems driven by either symmetric $\alpha$-stable L\'{e}vy motion  ($0<\alpha<1$) or Brownian motion. For stochastic dynamical systems with Brownian motion, minimizing an action functional is a general method to determine the most probable transition path.  We have developed a method based on path integrals to obtain the most probable transition path of stochastic dynamical systems with symmetric $\alpha$-stable L\'{e}vy motion or Brownian motion, and the most probable path can be characterized by a deterministic dynamical system.  \\
\noindent{\bf Keywords:} Symmetric $\alpha$-stable L\'evy motions;  Stochastic differential equations; Most probable transition path; Path integrals.
\end{abstract}

\section{Introduction}

Stochastic differential equations (SDEs) have been widely used to describe  complex   phenomena in physical, biological, and  engineering systems. Transition phenomena between dynamically significant states occur in nonlinear systems under random fluctuations. Hence a practical problem is that given a stochastic dynamical system, how to capture the transition behavior between two metastable states,  and then how to determine the most probable transition path. This subject has been  the research topic  by a number of authors
$\cite{Durr1978}$-$\cite{Khandekar2000}$.

In this paper, we consider the following SDE in the state space $\mathbb{R}^k$:
\begin{equation}\label{firstequation}
dX_t=b(X_t)dt+dL_t,  \;\; ~T_0\leq t\leq T_f,  \;\; ~ X_{T_0}=X_{0},
\end{equation}
where $L_t$ is a $k$-dimensional symmetric $\alpha$-stable  (non-Gaussian) L\'{e}vy motion    in the probability space $(\Omega,   \mathbb{P})$. The solution process  $X_t$  uniquely exists under approppriate conditions on the drift term $b(x): \mathbb{R}^k\rightarrow \mathbb{R}^k$  (see the next section). Moreover, $T_0,~T_f$ are the initial and final time instants, respectively.
For simplicity,  we first consider  the one dimensional case  ($k=1$) and will extend to higher dimensional cases ($k>1$)  in Section 3.

We will also compare with the following  one dimensional SDE system  with (Gaussian) Browmnian motion $B_t$:
\begin{equation} \label{Btequation}
dX_t=b(X_t)dt+dB_t, \;\;     ~T_0\leq t\leq T_f,  \;\;  ~ X_{T_0}=X_{0}.
\end{equation}
The Onsager-Machlup function $\cite{Durr1978}$ and $Path$ $integrals$ $\cite{Kath1981,Wio2013,Tang2014}$ are two methods to study the most probable transition path of this system (\ref{Btequation}). The central points of these two methods were to express the transition probability (density) function of a diffusion process by means of a functional integral over paths of the process. That is for the solution process $X_t$,  with initial time and position  $(T_0,X_{0})$ and final time and   position  $(T_f,X_{f})$. In Stratonovich discretization prescription, the transition probability density $p(X_{f},T_f|X_{0},T_0)$ (or denoted by $p(X_{f}|X_{0})$) is expressed as a path integral
\begin{align}\label{pathintegral}
p(X_{f},T_f|X_{0},T_0)=\int_{\mbox{\tiny$\begin{array}{c}
x_{T_0}=X_{0}\\
x_{T_f}=X_{f}
\end{array}$}}\mathcal{D}x~\exp\{-\frac{1}{2}\int_{T_0}^{T_f}[(\dot{x_s}-b(x_s))^2+b'(x_s)]ds\},
\end{align}
where $\mathcal{L}(x,\dot{x})=\frac{1}{2}[(\dot{x}-b(x))^2+b'(x)]$ is called the Lagrangian of ($\ref{firstequation}$). In Onsager-Machlup's method, $OM(x,\dot{x})=(\dot{x}-b(x))^2+b'(x)$ is called the $OM$ function.   When the path $x_t$  is restricted in continuous functions mapping from $[T_0,T_f]$ into $\mathbb{R}$, the   exponent   $S(x)=-\frac{1}{2}\int_{T_0}^{T_f}[(\dot{x}-b(x))^2+b'(x)]ds$ is called the  Onsager-Machlup action functional. Hence finding the most probable transition path is to find a path $x_t$ such that the Lagrangian (OM function or the action functional) to be minimum, which is called  the least action principle. This leads to the Euler-Lagrangian equation by means of a variational principle when the path restricted in twice differentiable functions. For more details of $Path$ $integrals$ and   applications,  see
$\cite{Wio2013}$,$\cite{Fey1965}$-$\cite{Wio1999}$ and references therein.

In this present paper, we will determine the most probable transition path  for the stochastic system with non-Gaussian noise (\ref{firstequation}).  The situation is different from the Gaussian case
(\ref{Btequation}).     If we try to get     the exponential form (containing the action functional) for the transition probability density function for the transition path  as in the Gaussian case,  we    need to use the Fourier transformation of the probability density $\cite{laskin2002,Jana2012}$ (or   characteristic function). For instance, the characteristic function of a $\alpha$-stable L\'{e}vy random variable is $\cite{Applebaum,Sato1999}$
\begin{align}
\psi(u)=\exp\{i\eta u-\sigma^{\alpha}|u|^{\alpha}[1-i\beta\frac{u}{|u|}w(u,\alpha)]\},
\end{align}
where $0<\alpha<2$ is the L\'{e}vy index, $-1\leq\beta\leq1$ is the skewness parameter, $\eta\in \mathbb{R}$ is the shift parameter, $\sigma\in \mathbb{R}+$   the scale parameter and
\begin{equation}
 w(u,\alpha)= \begin{cases}
 \tan(\frac{\pi \alpha}{2}),~\alpha\neq1,\\
 -\frac{2}{\pi}\ln(|u|),~\alpha=1.\\
  \end{cases}
\end{equation}
The density function of this random variable is
\begin{align}
f(x)=\frac{1}{2\pi}\int_{-\infty}^{+\infty}\exp(-iux)\psi(u)du.
\end{align}

Thus it brings the Fourier integral into the density function. So for path integral representation with this density form, it is hard  (and this is unlike $(\ref{pathintegral})$)  to obtain a convergent action function representation of paths:
\small{\begin{align}
p(X_{f},T_f|X_{0},T_0)=\lim_{n\rightarrow\infty}\int_{-\infty}^{\infty} \cdots\int_{-\infty}^{\infty} \Pi_{n}f(x_n-x_{n-1}-b(x_{n-1}\triangle t))\delta(x_{n}-X_{f})\mathcal{J}dx_1\cdots dx_{n},
\end{align}}
where $n$ is the partition number and $\mathcal{J}$ is the Jacobian of the   transformation given by
\begin{equation}
\begin{split}
\mathcal{J}=\det(\frac{\partial L_i}{\partial x_k}),~i=1,\cdots,n,~k=1,\cdots,n.
\end{split}
\end{equation}

Instead,  in this paper, we develop a method to characterize the most probable transition path,   based on the path integral rather than on the action functional (or the Onsager-Machlup function). This is made possible with a new representation   $\cite{Niels2012}$ for the transition probability density functions of symmetric L\'{e}vy motions  in terms of two families of metrics. This representation provides  an exponential structure of  the transition probability density function  $\cite{Niels2012,Jacob2001}$. It can be further extended   in our case,  which will be discussed in Section 2.2.

This paper is organized as follows. In Section 2, we recall some preliminaries. In Section 3, we develop a method to characterize the most probable transition paths for  a stochastic system  with symmetric $\alpha$-stable L\'{e}vy motion  ($0<\alpha<1$) or Brownian motion. In Section 4, we extend the results of Section 3 to higher dimensional cases.  Finally,  in Section 5, we present several examples to illustrate our results.

\section{Preliminaries}
\subsection{L\'{e}vy motions}
We recall some basic facts  about  1-dimensional L\'{e}vy motions (or L\'{e}vy processes)  \cite{Applebaum,Sato1999,Duan2015}.

\begin{myDef}
A stochastic process $L_t$ is a L\'{e}vy process if\\
(\romannumeral1) $L_0$=0 (a.s.);\\
(\romannumeral2) $L_t$ has independent increments and stationary increments; and\\
(\romannumeral3) $L_t$ has stochastically continuous sample paths, $i.e.$, for every $s\geq0$, $L(t)\rightarrow L(s)$ in probability, as $t\rightarrow s$.
\end{myDef}

A L\'{e}vy process $L_t$ taking values in $\mathbb{R}$ is characterized by a drift term $\eta\in \mathbb{R}$, a non-negative variance $Q$ and a Borel measure $\nu$ defined on $\mathbb{R}\setminus \{0\}$. $(\eta,Q,\nu)$ is called the generating triplet of the L\'{e}vy motion $L_t$. Moreover, the L\'{e}vy-It\^{o} decomposition for $L_t$ as follows:
\begin{align}
L_t=\eta t+B_{Q}(t)+\int_{|z|<1}z\tilde{N}(t,dz)+\int_{|z|\geq1}zN(t,dz),
\end{align}
where $N(dt,dz)$ is the Poisson random measure, $\tilde{N}(dt,dz)=N(dt,dz)-\nu(dz)dt$ is the compensated Poisson random measure, and $\nu(S)\triangleq \mathbb{E}N(1,S)$, here $\mathbb{E}$  denotes the expectation with respect to the probability $\mathbb{P}$, and $B_{\sigma}(t)$ is a Brownian  motion with variance $\sigma$. The characteristic function of $L_t$ is given by
\begin{align}
\mathbb{E}[\exp(i u L_t )]=\exp(-t \psi(u)),~~u\in \mathbb{R},
\end{align}
where the function $\psi: \mathbb{R}\rightarrow \mathbb{C}$ is the characteristic exponent
\begin{align}
\psi(u)=i u \eta +\frac{1}{2} Q u^2 +\int_{\mathbb{R}\setminus\{0\}}(1- e^{i u z }+i u z  \mathbb{I}_{\{|z|<1\}})\nu(dz).
\end{align}
The Borel measure $\nu$ is called the jump measure.

\subsection{Asymptotic properties of the probability density functions of $\alpha$-stable L\'{e}vy motions}
From now on, we consider a scalar symmetric $\alpha$-stable L\'{e}vy processes.  Recall the standard symmetric $\alpha$-stable random variable has  distribution  $S_{\alpha}(1,0,0)$. Here $S_{\alpha}(\sigma,\beta,\mu)$ is the distribution of a stable random variable, with  $\sigma$   the scale parameter, $\beta$   the skewness parameter and $\mu$   the shift parameter.   The corresponding probability density function $f_{\alpha}(x)$  can be represented as an  infinite series $\cite{Duan2015,Janicki1994,Shao1993}$
\begin{equation}
  f_{\alpha}(x)=\begin{cases}
 \frac{1}{\pi \alpha}\sum_{k=1}^{\infty}\frac{(-1)^{k-1}}{k!}\Gamma(\alpha k+1)|x|^{-\alpha k}\sin(\frac{\alpha k \pi}{2}),&x\neq0,~0<\alpha<1,\\
 \frac{1}{\pi}\int_{0}^{\infty}e^{-u^{\alpha}}du,&x=0,~0<\alpha<1,\\
 \frac{1}{\pi (1+x^2)},&\alpha=1,\\
 \frac{1}{\pi \alpha}\sum_{k=0}^{\infty}\frac{(-1)^{k}}{(2k)!}\Gamma(\frac{2k+1}{\alpha})x^{2k},&1<\alpha<2.
  \end{cases}
\end{equation}
Recall the probability density function $f_{b}(x)$ for a Brownian random variable $X\sim N(0,\sigma^2)$ is $\cite{Duan2015}$
\begin{equation}
  f_{b}(x)=\frac{1}{\sqrt{2\pi} \sigma}e^{-\frac{x^2}{2\sigma^2}}.\\
\end{equation}

In $\cite{Jana2012}$, it was proved that the transition probability density function $p(x_t|x_0)$ of a symmetric $\alpha$-stable L\'{e}vy process is
\begin{equation}
\begin{split}
p(x_t|x_0)&=\frac{1}{t^{1/\alpha}}f_{\alpha}(\frac{x_t-x_0}{t^{1/\alpha}})\\
&=\frac{1}{t^{1/\alpha}}\exp[-(-\ln f_{\alpha}(\frac{x_t-x_0}{t^{1/\alpha}}))]\\
&\triangleq \frac{1}{t^{1/\alpha}}\exp[-\theta^{\alpha}_t(x_t-x_0)],
\end{split}
\end{equation}
where $\theta^{\alpha}_t(\cdot)$ is a function maps $[0,\infty)$ to $[0,\infty)$ for any $\alpha\in(0,2)$ and $t\in(0,\infty)$. Differentiate $\theta^{\alpha}_t(x)$  with respect to variable $x$:
\begin{equation}
\begin{split}
(\theta^{\alpha}_t(x))'&=(-\ln f_{\alpha}(\frac{x}{t^{1/\alpha}}))'\\
&=-\frac{f'_{\alpha}(\frac{x}{t^{1/\alpha}})}{f_{\alpha}(\frac{x}{t^{1/\alpha}})}\frac{1}{t^{1/\alpha}},
\end{split}
\end{equation}
which shows that $\theta^{\alpha}_t(\cdot)$ is a strict increase function since $f'_{\alpha}(\frac{x}{t^{1/\alpha}})\leq0$ for symmetric $\alpha$-stable L\'{e}vy random variables. Now we focus on the concavity of the function $\theta^{\alpha}_t(\cdot)$.
\begin{equation}
\begin{split}
(\theta^{\alpha}_t(x))''&=(-\frac{f'_{\alpha}(\frac{x}{t^{1/\alpha}})}{f_{\alpha}(\frac{x}{t^{1/\alpha}})}\frac{1}{t^{1/\alpha}})'\\
&=-\frac{f_{\alpha}(\frac{x}{t^{1/\alpha}})f''_{\alpha}(\frac{x}{t^{1/\alpha}})-(f'_{\alpha}(\frac{x}{t^{1/\alpha}}))^2}{f^2_{\alpha}(\frac{x}{t^{1/\alpha}})}\frac{1}{t^{2/\alpha}},
\end{split}
\end{equation}

In $\cite{Applebaum,Janicki1994}$, it was proved that if $X\sim S_{\alpha}(\sigma,\beta,\mu)$ and $\alpha\in(0,2)$,
\begin{equation}
\begin{split}
\lim_{y\rightarrow\infty}y^{\alpha}\mathbb{P}(X>y)=C_{\alpha}\frac{1+\beta}{2}\sigma^{\alpha},\\
\lim_{y\rightarrow\infty}y^{\alpha}\mathbb{P}(X<-y)=C_{\alpha}\frac{1-\beta}{2}\sigma^{\alpha},
\end{split}
\end{equation}
where
\begin{align}
C_{\alpha}=(\int_{0}^{\infty}x^{-\alpha}\sin(x)dx)^{-1}.
\end{align}

We use this result to study the asymptotic behavior of tail probabilities. For $y$ large enough,
\begin{equation}\label{asymptotic}
\begin{split}
&y^{\alpha}\mathbb{P}(X>y)=C_{\alpha}\frac{1+\beta}{2}\sigma^{\alpha},\\
\Leftrightarrow~&y^{\alpha}\int_{y}^{\infty}f_{\alpha}(x)dx=C_{\alpha}\frac{1+\beta}{2}\sigma^{\alpha},\\
\Leftrightarrow~&f_{\alpha}(y)=\alpha C_{\alpha}\frac{1+\beta}{2}\sigma^{\alpha} y^{-\alpha-1}\triangleq Cy^{-\alpha-1},\\
\Leftrightarrow~&f''_{\alpha}(y)f_{\alpha}(y)-(f'_{\alpha}(y))^2=C^2(\alpha+1)y^{-2\alpha-4},
\end{split}
\end{equation}
which means that the asymptotic behavior of tail concavity and convexity of $\theta^{\alpha}_t(\cdot)$ is concave. And the graphs of $\theta^{\alpha}_1(\cdot)$ are shown in Figure $\ref{concavefigure}$.

For Brownian motion case, similar to the symmetric $\alpha$-stable L\'{e}vy motion case, the corresponding exponent is $\theta^b_{t}(x)=\frac{x^2}{2t}$, which is convex.

We also notice that
\begin{equation}
\begin{split}
&\theta^{\alpha}_t(x-y)=\theta^{\alpha}_t(|x-y|)\triangleq\theta^{\alpha}_t(x,y),\\
&\theta^{b}_t(x-y)=\theta^{b}_t(|x-y|)\triangleq\theta^{b}_t(x,y).
\end{split}
\end{equation}
\begin{figure}
 \centering
 \subfigure[]{
 \includegraphics[width=7cm,height=6cm]{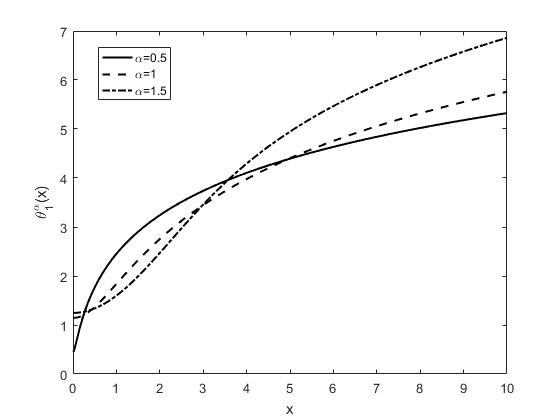}
 }
 \subfigure[]{
 \includegraphics[width=7cm,height=6cm]{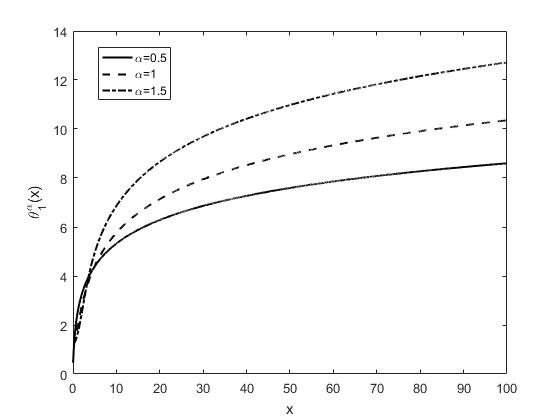}
 }
 \caption{The graphs of function $\theta^{\alpha}_1(x)$ (with $\alpha=0.5,~1~$ and~$1.5$):  (a)   on $[0,10]$;  (b)  on $[0,100]$.}\label{concavefigure}
 \end{figure}

\subsection{Conditions for the well-posedness of the system ($\ref{firstequation}$) and ($\ref{Btequation}$)}

For the system ($\ref{firstequation}$):
\begin{equation*}
dX_t=b(X_t)dt+dL_t,~T_0\leq t\leq T_f,~ X_{T_0}=X_{0},
\end{equation*}
it was proved in $\cite{Albeverio2010,Xi2018}$ that if $b(x)$ is locally Lipschitz continuous function and satisfies ``one sided linear growth'' condition in the following sense:
\begin{itemize}
\item {\bf C1 (Locally Lipschitz condition)}  For any $r>0$, there exists $K_1>0$ such that, for all
$|y_1|,|y_2|\leq r$,
$$|b(y_1)-b(y_2)|^2\leq K_1|y_1-y_2|^2,$$
\item {\bf C2 (One sided linear growth condition)} There exists $K_2>0$ such that, for all $y\in \mathbb{R}$,
$$2y\cdot b(y)\leq K_2(1+|y^2|),$$
\end{itemize}
then there exists a unique global solution to ($\ref{firstequation}$) and the solution is adapted and c\`{a}dl\`{a}g. These two conditions also guarantee  the existence and uniqueness for the solution of ($\ref{Btequation}$).

\section{Method}
In this section, we first study the transition behavior of the system without drift term, and  then we study the general case for system ($\ref{firstequation}$) and ($\ref{Btequation}$). Before we develop the method,  we present the following definition.
\begin{myDef}
For a solution  path $X_t$ and the time interval $[T_0,T_f]$ with a partition $T_0=t_0\leq t_1\leq\cdots\leq t_n=T_f$, define the sequence $\{X_{t_0},X_{t_1},\cdots,X_{t_n}\}$ as a discretized path of $X_t$. And a discretized path $\{X_{t_0},X_{t_1},\cdots,X_{t_n}\}$ is said to be monotonic
 with respect to the time partition $T_0=t_0\leq t_1\leq\cdots\leq t_n=T_f$ if either  $X_{t_0}\leq X_{t_1}\leq\cdots\leq X_{t_n}$ or $X_{t_0}\geq X_{t_1}\geq\cdots\geq X_{t_n}$
\end{myDef}

\subsection{Most probable transition path in the absence of drift}
\begin{myTheo}\label{mtheorem}
({Monotonicity for the most probable transition path  in the absence of drift}) \\
For   system ($\ref{firstequation}$) and system $\ref{Btequation}$ with drift $b\equiv0$,   every discretized path $\{X_{t_0},X_{t_1},\cdots,X_{t_n}\}$ of the most probable transition path  is monotonic
 with respect to every time partition $T_0=t_0\leq t_1\leq\cdots\leq t_n=T_f$.
\end{myTheo}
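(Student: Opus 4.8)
The plan is to turn the statement, on each fixed time partition, into a finite–dimensional minimization problem and then to prove that \emph{every} minimizer is a monotone sequence, using only that the exponents $\theta^{\alpha}_t$ and $\theta^{b}_t$ are strictly increasing on $[0,\infty)$.

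First I would fix a partition $T_0=t_0\le t_1\le\cdots\le t_n=T_f$, write $\Delta t_i=t_i-t_{i-1}$, and use that for $b\equiv 0$ the solution is a L\'evy process, so its increments are stationary and independent. Hence, by the Markov property, for fixed endpoints $X_{t_0}=X_0$, $X_{t_n}=X_f$ the joint density of the interior values factors as $\prod_{i=1}^{n}p(X_{t_i}\mid X_{t_{i-1}})$, and by the representation recalled in Section~2.2 this equals $\big(\prod_{i=1}^{n}(\Delta t_i)^{-1/\alpha}\big)\exp\!\big[-\sum_{i=1}^{n}\theta^{\alpha}_{\Delta t_i}(|X_{t_i}-X_{t_{i-1}}|)\big]$ in the L\'evy case, and the analogous identity with a path-independent prefactor and $\theta^{b}_{t}(x)=x^{2}/(2t)$ in the Brownian case. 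Since the prefactor does not depend on the path, the discretized most probable transition path on this partition is exactly a minimizer of
\[
\Phi(a_1,\dots,a_{n-1}) \;=\; \sum_{i=1}^{n}\theta_{\Delta t_i}\big(|a_i-a_{i-1}|\big),\qquad a_0=X_0,\ a_n=X_f,
\]
where $\theta$ denotes $\theta^{\alpha}$ or $\theta^{b}$. The function $\Phi$ is continuous and coercive (some increment must blow up if $\|(a_i)\|\to\infty$, and $\theta_t(x)\to\infty$ as $x\to\infty$), so a minimizer exists, and it suffices to prove that every minimizer of $\Phi$ is monotone.

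Next I would record an elementary combinatorial fact: a finite real sequence $a_0,\dots,a_n$ fails to be monotone if and only if it has a (possibly flat) strict interior local extremum, i.e.\ indices $1\le k\le l\le n-1$ with $a_{k-1}<a_k=\cdots=a_l>a_{l+1}$ or $a_{k-1}>a_k=\cdots=a_l<a_{l+1}$. (If the sequence is neither non-decreasing nor non-increasing, it has a strict descent $a_p>a_{p+1}$ and a strict ascent $a_q<a_{q+1}$; assuming $p<q$ and selecting the first lowest plateau of $\{a_i\}_{p+1\le i\le q}$ produces a flat interior minimum, and the reverse orientation a flat interior maximum.) The crux is then a shifting/exchange argument. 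Suppose a minimizer $(a_i)$ had a flat interior local max on $[k,l]$; set $a_i'=a_i-\varepsilon$ for $k\le i\le l$ and $a_i'=a_i$ otherwise, with $0<\varepsilon<\min(a_k-a_{k-1},\,a_l-a_{l+1})$. Every term of $\Phi$ whose two indices both lie in $[k,l]$ or both lie outside it is unchanged; only the two boundary terms $i=k$ and $i=l+1$ change (and these are distinct since $k\le l$), and the relevant absolute increments strictly decrease, from $a_k-a_{k-1}$ to $a_k-a_{k-1}-\varepsilon$ and from $a_l-a_{l+1}$ to $a_l-a_{l+1}-\varepsilon$. Strict monotonicity of $\theta_{\Delta t_k}$ and $\theta_{\Delta t_{l+1}}$ then gives $\Phi(a')<\Phi(a)$, contradicting minimality; the interior-minimum case is identical after raising the plateau. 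Hence every minimizer of $\Phi$ is monotone, which is the assertion, uniformly over all partitions.

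The main obstacle I anticipate is not analytic but combinatorial bookkeeping: making the reduction ``non-monotone $\Rightarrow$ strict interior local extremum'' precise in the presence of flat stretches, and verifying that in the exchange step exactly the two boundary terms of $\Phi$ change and that these two terms are genuinely distinct. I would also stress that, despite the discussion of concavity of $\theta^{\alpha}_t$ in Section~2.2, concavity plays no role here: only the strict monotonicity of $\theta^{\alpha}_t$ and $\theta^{b}_t$ is used, so the L\'evy case ($0<\alpha<1$) and the Brownian case are handled by one and the same argument.
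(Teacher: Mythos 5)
Your proposal is correct and follows essentially the same route as the paper: both reduce the problem to minimizing the discretized action quantity $\sum_i\theta_{\Delta t_i}(|a_i-a_{i-1}|)$ over the interior points and then use only the strict monotonicity of $\theta^{\alpha}_t$ and $\theta^{b}_t$ to show that any non-monotone configuration can be strictly improved. The one difference is that the paper carries out the improvement explicitly only for $n=2$ (moving a single interior point that lies outside $[X_0,X_f]$ back to the boundary) and asserts the general case is ``similar,'' whereas your plateau-shifting exchange argument supplies the missing combinatorial step for arbitrary $n$.
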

\begin{proof}
For a time interval partition $T_0=t_0\leq t_1\leq t_2\leq \cdots\leq t_n=T_f$,  define $t_{i+1}-t_i \triangleq \epsilon=(T_f-T_0)/n,~i=0,1,\cdots,n-1$.  (This proof is also true for an arbitrary partition.) In path integral method,  the transition density function (or   Markov transition probability) of $X_t$ of ($\ref{firstequation}$) with $b\equiv0$ is
\begin{equation}\label{pdf}
\begin{split}
p(X_{f},T_f|X_{0},T_0)&=\int_{-\infty}^{\infty}\cdots\int_{-\infty}^{\infty}dx_{t_1}\cdots dx_{{t_{n-1}}}p_{\epsilon}(x_{t_1}|X_{0})p_{\epsilon}(x_{t_2}|x_{t_1})\cdots p_{\epsilon}(X_{f}|x_{{t_{n-1}}})\\
&=\int \mathcal{D}_n x ~\exp\{-\theta^{\alpha}_{\epsilon}(x_{{t_1}},X_{0})-\sum_{i=1}^{n-2}\theta^{\alpha}_{\epsilon}(x_{{t_{i+1}}},x_{t_i})-\theta^{\alpha}_{\epsilon}(X_{f},x_{{t_{n-1}}})\}\\
&\triangleq \int \mathcal{D}_n x ~\exp\{-\sum_{i=0}^{n-1}\theta^{\alpha}_{\epsilon}(x_{{t_{i+1}}},x_{t_i})\}\\
&\triangleq \int \mathcal{D}_n x ~\exp\{-\mathcal{S}_n(x)\},
\end{split}
\end{equation}
where $\mathcal{D}_nx=\epsilon^{-1/\alpha}\prod_{i=1}^{n-1}\epsilon^{-1/\alpha}dx_{t_i}$.  Note that   $x$ of $\mathcal{S}_n(x)$ is a path connecting  $(T_0,X_{0})$ and $(T_f,X_{f})$ ($i.e.$ it starts at $X_0$ at the time $t=T_0$ and reaches $X_f$ at the time $t=T_f$).   We call $\mathcal{S}_n(x)$   the \textbf{action quantity} of $x$. The contribution of a path $x_t$ to the transition probability density $p_t(X_{f},T_f|X_{0},T_0)$ depends on $\mathcal{S}_n(x)=\sum_{i=0}^{n-1}\theta^{\alpha}_{\epsilon}(x_{{t_{i+1}}},x_{t_i})$.

In order to find the most probable transition path $u_t$,   we are supposed to find it satisfies that
\begin{align}\label{s1}
\mathcal{S}_n(u_t)=\min_{x_t\in D_{X_{0}}^{X_{f}}}\mathcal{S}_n(x_t),
\end{align}
where $D_{X_{0}}^{X_{f}}$ denotes the set of paths that connect $(T_0,X_0)$ and $(T_f,X_f)$.    Equivalently
\begin{align}\label{s2}
\frac{\mathcal{S}_n(x_t)}{\mathcal{S}_n(u_t)}\geq 1,
\end{align}
for every path $x_t$ connecting $(T_0,X_{0})$ and $(T_f,X_{f})$. Here $n$ goes to infinity, and the $\lim_{n\rightarrow\infty}$   is dropped for now for clarity.

Without loss of generality, we set $X_{0}<X_{f}$. When $n=2$ (the time partition is $\{T_0=t_0<t_1<t_2=T_f\}$), for a path $x_t$ connecting $(T_0,X_0)$ and $(T_f,X_f)$, $\mathcal{S}_2(x_t)=\theta^{\alpha}_{\epsilon}(x_{t_1},X_{0})+\theta^{\alpha}_{\epsilon}(X_{f},x_{t_1})$, if $x_{t_1}<X_{0}$. Since $\theta^{\alpha}_{\epsilon}(\cdot)$ is a strictly increasing function,  together with $|x_{t_1}-X_{0}|>|X_0-X_{0}|=0$ and $|X_{f}-x_{t_1}|>|X_{f}-X_{0}|$, we have $\mathcal{S}_2(x_t)>\theta^{\alpha}_{\epsilon}(X_{0},X_{0})+\theta^{\alpha}_{\epsilon}(X_{0},X_{f})$ where $\{X_{0},x_{t_1}=X_{f},X_{f}\}$ is a discretized  path of a certain transition path connecting $(T_0,X_{0})$ and $(T_f,X_{f})$. The case that $x_{t_1}>X_{f}$ is similar. When we add time partitions the situation is similar again. Thus the discretized  path $\{X_{t_0},X_{t_1},\cdots,X_{t_n}\}$ of the most probable transition path of $X_t$ is supposed to be monotonic with respect to the time partition, otherwise there exists a path whose action  quantity is smaller.

For system ($\ref{Btequation}$), the proof is similar.
\end{proof}

The reason that we use ($\ref{s2}$) to find the most probable transition path is: For symmetric $\alpha$-stable L\'{e}vy motion case, the corresponding action quantity $\mathcal{S}_n(x_t)$ goes to infinity as long as $n$ goes to infinity for any path $x_t\in D_{X_0}^{X_f}$ (since $\mathcal{S}_n(x_t)\geq n\theta_{\epsilon}^{\alpha}(0)$ and $\theta_{\epsilon}^{\alpha}(0)$ is a positive constant. Here $n\theta_{\epsilon}^{\alpha}(0)$ is the action quantity of the path $X_t\equiv X_0$, $t\in[T_0,T_f]$). For simplicity we say the action quantity of $x_t$ has higher order than $u_t$ if
\begin{align}
\lim_{n\rightarrow\infty}\frac{\mathcal{S}_n(x_t)-n\theta_{\epsilon}^{\alpha}(0)}{\mathcal{S}_n(u_t)-n\theta_{\epsilon}^{\alpha}(0)}=+\infty.
\end{align}
That is, we compare the action quantities of two paths with the help of the fixed path ($X_t\equiv X_0$, $t\in[T_0,T_f]$), which considers $X_t\equiv X_0$ $(t\in[T_0,T_f])$ as a reference path. It helps us to compare the action quantities easily, which will be shown in the proof of Corollary $\ref{lemmaconvex}$. Actually,
\begin{align*}
&\mathcal{S}_n(u_t)-n\theta_{\epsilon}^{\alpha}(0)\\
=&\sum_{i=1}^{n}\theta_{\epsilon}^{\alpha}(x_i,x_{i-1})-n\theta_{\epsilon}^{\alpha}(0)\\
=&\sum_{i=1}^{n}[\theta_{\epsilon}^{\alpha}(x_i,x_{i-1})-\theta_{\epsilon}^{\alpha}(0)]\\
=&\sum_{i=1}^{n}[-\ln f_{\alpha}(x_i-x_{i-1})+\ln f_{\alpha}(0)]\\
=&\sum_{i=1}^{n}[-\ln \frac{f_{\alpha}(x_i-x_{i-1})}{f_{\alpha}(0)}].
\end{align*}
Denoting $g_{\alpha}(x_i-x_{i-1})=\frac{f_{\alpha}(x_i-x_{i-1})}{f_{\alpha}(0)}$, the function $g_{\alpha}(\cdot)$ could be regarded  as a new ``probability'' density function whose integration over $(-\infty,\infty)$ is $\frac{1}{f_{\alpha}(0)}$.

The most probable transition path might not be unique, $i.e.$, there might be several paths satisfying (\ref{s1}) or (\ref{s2}). In fact we will see in the following corollary that, for system (\ref{firstequation}), the number of  the most probable transition paths is infinity but they can be characterized by a class of paths that have only one jump, and the difference among the paths of this class is the jump time. See Remark \ref{remark1}  after the proof of the following corollary.

\begin{corollary}\label{lemmaconvex}
(\romannumeral1) For system ($\ref{firstequation}$) with $b\equiv0$,  if $L_t$ is a symmetric $\alpha$-stable L\'{e}vy noise with $0<\alpha<1$, then the most probable transition path is not unique, and it can be represented as a $Heaviside$-like function
\begin{equation}\label{mpptp}
  X^m_{t}=\begin{cases}
X_{0},~~T_0\leq t<t^*,\\
X_{f},~~t^*\leq t\leq T_f,\\
  \end{cases}
\end{equation}
for  every  time instant $t^*$ satisfying $T_0<t^* \leq T_f$;

(\romannumeral2) For system ($\ref{Btequation}$) with  drift $b\equiv0$, the most probable transition path is  the line segment connecting $X_0$ and $X_f$:    $X^m_{t}=X_{0}+\frac{t-T_0}{T_f-T_0}(X_{f}-X_{0})$;
\end{corollary}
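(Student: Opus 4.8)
The plan is to analyze the action quantity $\mathcal{S}_n(x_t)=\sum_{i=0}^{n-1}\theta^{\alpha}_{\epsilon}(x_{t_{i+1}},x_{t_i})$ in the two regimes separately, exploiting the concavity of $\theta^{\alpha}_t(\cdot)$ in the L\'{e}vy case (established in Section 2.2) and the convexity of $\theta^b_t(x)=x^2/2t$ in the Brownian case. By Theorem \ref{mtheorem} we may restrict attention to monotonic discretized paths, so assuming $X_0<X_f$ we have $x_{t_0}=X_0\le x_{t_1}\le\cdots\le x_{t_n}=X_f$. Writing $\delta_i=x_{t_{i+1}}-x_{t_i}\ge0$ with $\sum_{i=0}^{n-1}\delta_i=X_f-X_0$ fixed, the problem of minimizing $\mathcal{S}_n$ becomes minimizing $\sum_i \theta^{\alpha}_{\epsilon}(\delta_i)$ over the simplex $\{\delta_i\ge0,\ \sum_i\delta_i=X_f-X_0\}$.

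For part (\romannumeral1): since $\theta^{\alpha}_{\epsilon}$ is concave on $[0,\infty)$ (from the asymptotic analysis \eqref{asymptotic}) and $\theta^{\alpha}_{\epsilon}(0)>0$, I would argue that a concave function subject to a linear constraint on a simplex attains its minimum at an extreme point, i.e.\ when all the "mass" $X_f-X_0$ is concentrated in a single increment $\delta_j$ and the remaining $\delta_i=0$. Concretely, for any pair of adjacent nonzero increments one can shift mass from one to the other, and concavity (equivalently, subadditivity-type inequality $\theta^{\alpha}_\epsilon(a)+\theta^{\alpha}_\epsilon(b)>\theta^{\alpha}_\epsilon(a+b)+\theta^{\alpha}_\epsilon(0)$ for $a,b>0$, which follows from concavity together with $\theta^{\alpha}_\epsilon$ being increasing) strictly decreases the action. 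Hence the minimizing discretized path has the form $(X_0,\ldots,X_0,X_f,\ldots,X_f)$ with a single jump; passing to the continuum and using that the comparison in \eqref{s2} is made relative to the reference path $X_t\equiv X_0$ (the "higher order" bookkeeping introduced after the proof of Theorem \ref{mtheorem}, via $g_\alpha$), every choice of jump time $t^*\in(T_0,T_f]$ yields a path of the same (lowest) order, none dominating another, which gives the Heaviside form \eqref{mpptp} and its non-uniqueness. I should also check that placing the jump exactly at $t^*=T_0$ is excluded (the initial condition forces $X_{T_0}=X_0$), which is why the range is $T_0<t^*\le T_f$.

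For part (\romannumeral2): here $\theta^b_\epsilon(\delta)=\delta^2/2\epsilon$ is strictly convex, so minimizing $\sum_i \delta_i^2/2\epsilon$ over the same simplex gives, by Jensen or by Lagrange multipliers, the unique minimizer $\delta_i\equiv (X_f-X_0)/n$ for all $i$; hence the discretized minimizer is the arithmetic progression from $X_0$ to $X_f$, and in the limit $n\to\infty$ this is exactly the line segment $X^m_t=X_0+\frac{t-T_0}{T_f-T_0}(X_f-X_0)$. One should note this also matches the classical Onsager--Machlup / least-action answer for \eqref{Btequation} with $b\equiv0$, serving as a consistency check.

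The main obstacle, and the step I would spend the most care on, is making the "concave minimization on a simplex $\Rightarrow$ extreme point" argument rigorous in the path-integral limit: for fixed $n$ this is elementary, but one must argue that refining the partition does not create a competitor path of strictly lower order, and that the single-jump paths are genuinely the minimizers of \eqref{s2} in the $n\to\infty$ sense rather than merely in each finite approximation. The device of subtracting $n\theta^{\alpha}_\epsilon(0)$ and working with $g_\alpha$ is precisely what controls this limit, so the proof should be phrased in those terms; a secondary technical point is justifying the strict inequality in the subadditivity estimate for $\theta^{\alpha}_\epsilon$ (strictness of concavity away from the tail), for which the explicit series/asymptotic formulas for $f_\alpha$ suffice.
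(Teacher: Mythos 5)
Your part (\romannumeral2) is correct and is essentially the paper's argument: convexity of $\theta^b_{\epsilon}$ plus Jensen forces all increments to be equal, giving the line segment in the limit.

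Part (\romannumeral1) has a genuine gap. Your argument rests on the premise that $\theta^{\alpha}_{\epsilon}$ is concave on all of $[0,\infty)$, from which you deduce the subadditivity $\theta^{\alpha}_{\epsilon}(a)+\theta^{\alpha}_{\epsilon}(b)>\theta^{\alpha}_{\epsilon}(a+b)+\theta^{\alpha}_{\epsilon}(0)$ and hence that, for each fixed $n$, the minimum over the simplex sits at a vertex. But the computation (\ref{asymptotic}) only yields concavity of $\theta^{\alpha}_{1}$ on a tail interval $[r,\infty)$; near the origin $\theta^{\alpha}_{1}$ is convex (the symmetric stable density has a smooth maximum at $0$ with $f'_{\alpha}(0)=0$ and $f''_{\alpha}(0)<0$, so $(\theta^{\alpha}_{1})''(0)=-f''_{\alpha}(0)/f_{\alpha}(0)>0$; see Figure \ref{concavefigure}). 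Consequently the fixed-$n$ ``concave minimization on a simplex attains its minimum at an extreme point'' step is not available: increments lying in the convex region near $0$ prefer to spread out rather than coalesce. The paper circumvents this by exploiting the scaling $\theta^{\alpha}_{\epsilon}(\delta)=\theta^{\alpha}_{1}\bigl(n^{1/\alpha}\delta/(T_f-T_0)^{1/\alpha}\bigr)$, so that as $n\to\infty$ every increment with non-negligible share $\lambda_j$ is pushed into the concave tail where (\ref{concavity}) applies, and by estimating the total weight of the remaining increments: $\sum_{n^{1/\alpha}\lambda_jC_{0f}<r}\lambda_j\le n\cdot rn^{-1/\alpha}/C_{0f}=(r/C_{0f})\,n^{1-1/\alpha}\to 0$, which holds precisely because $0<\alpha<1$. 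This is the step your proposal is missing, and it is essential: your argument never invokes $\alpha<1$ and would equally ``prove'' the conclusion for $1\le\alpha<2$, where (\ref{control}) fails and the claim is no longer made (cf.\ the remark on $1\le\alpha<2$ following Theorem \ref{theoremconcave}). A corrected version of your approach should replace global concavity by tail concavity together with this two-regime splitting and the $n^{1-1/\alpha}\to0$ estimate; with that in place, the rest of your outline (comparison relative to the reference path $X_t\equiv X_0$ via the subtraction of $n\theta^{\alpha}_{\epsilon}(0)$, and non-uniqueness in the jump time $t^*$) does match the paper.
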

\begin{proof}

(\romannumeral1) For system ($\ref{firstequation}$) with drift $b\equiv0$, as shown in Section 2, we notice that for a time interval partition $T_0=t_0\leq t_1\leq \cdots\leq t_n=T_{f}$, $t_{i+1}-t_i=\epsilon=\frac{T_f-T_0}{n}$,
\begin{align}
\theta^{\alpha}_{\epsilon}(|x-y|)=\theta^{\alpha}_{1}(|\frac{x-y}{\epsilon^{1/\alpha}}|)=\theta^{\alpha}_{1}(n^{\alpha}|\frac{x-y}{(T_f-T_0)^{1/\alpha}}|).
\end{align}

Define a path space  $DM_{X_{0}}^{X_{f}}=\{x_t|~x_t$ is a monotonic path connects $(T_0,X_{0})$  and $(T_f,X_{f})$\}. So one should search for the most probable transition path within path space $DM_{X_{0}}^{X_{f}}$.

Take a path $x_t\in DM_{X_{0}}^{X_{f}}$  and a time partition   $\{T_0=t_0<t_1<\cdots<t_n=T_f\}$. Assume that $\{\lambda_j\}_{j=0}^{n-1}$ are non-negative constants, and $x_{t_{j+1}}-x_{t_{j}}=\lambda_j(X_{f}-X_{0})$ $(0\leq j \leq n-1)$. It is easy to see that $\sum_{j=0}^{n-1}\lambda_j=1$ by the Theorem $\ref{mtheorem}$, and $0\leq\lambda_j\leq1$. As discussed in Section 2.2, $\theta^{\alpha}_1(\cdot)$ is concave in $[r,\infty)$ for some constant $r\in\mathbb{R}^+$ ($r$   depending on  $\alpha$).

For $n^{1/\alpha}\lambda_j|\frac{X_{f}-X_{0}}{(T_f-T_0)^{1/\alpha}}|\triangleq n^{1/\alpha}\lambda_jC_{0f}\geq r$, and $n$ large enough, we obtain
\begin{equation}\label{concavity}
\begin{split}
\theta^{\alpha}_{\epsilon}(|x_{t_{j+1}}-x_{t_{j}}|)-c&=\theta^{\alpha}_{1}(n^{1/\alpha}\lambda_j|\frac{X_{f}-X_{0}}{(T_f-T_0)^{1/\alpha}}|)-c\\
&\geq \lambda_j(\theta^{\alpha}_{1}(n^{1/\alpha}|\frac{X_{f}-X_{0}}{(T_f-T_0)^{1/\alpha}}|)-c)\\
&= \lambda_j (\theta^{\alpha}_{\epsilon}(|X_{f}-X_{0}|)-c),
\end{split}
\end{equation}
where $c=\theta^{\alpha}_{\epsilon}(0)$ is a positive constant and $\theta^{\alpha}_1(\cdot)-c$ is non-negative and concave in $[r,\infty)$.

When $n^{1/\alpha}\lambda_jC_{0f}<r$, we have
\begin{equation}
\begin{split}
0\leq\sum_{n^{1/\alpha}\lambda_jC_{0f}<r}\lambda_j<n\frac{r}{n^{1/\alpha}C_{0f}},
\end{split}
\end{equation}
and
\begin{equation}\label{control}
\begin{split}
\lim_{n\rightarrow\infty}\sum_{n^{1/\alpha}\lambda_jC_{0f}<r}\lambda_j=0,~\lim_{n\rightarrow\infty}\sum_{n^{1/\alpha}\lambda_jC_{0f}\geq r}\lambda_j=1.
\end{split}
\end{equation}

Hence
\small{\begin{equation}
\begin{split}
&\lim_{n\rightarrow\infty}\mathcal({S}_n(x_t)-n\theta_{\epsilon}^{\alpha}(0))/\{[(n-1)\theta_{\epsilon}^{\alpha}(0)+\theta^{\alpha}_{1}(n^{1/\alpha}C_{0f})]-n\theta_{\alpha}^{\epsilon}(0)\}\\
&=\lim_{n\rightarrow\infty}[\sum_{j=0}^{n-1}\theta^{\alpha}_{1}(n^{1/\alpha}\lambda_jC_{0f})-n\theta_{\epsilon}^{\alpha}(0)]/[\theta^{\alpha}_{1}(n^{1/\alpha}C_{0f})-\theta_{\epsilon}^{\alpha}(0)]\\
&=\lim_{n\rightarrow\infty}[\sum_{n^{1/\alpha}\lambda_jC_{0f}<r}(\theta^{\alpha}_{1}(n^{1/\alpha}\lambda_jC_{0f})-\theta_{\epsilon}^{\alpha}(0))+\sum_{n^{1/\alpha}\lambda_jC_{0f}\geq r}(\theta^{\alpha}_{1}(n^{1/\alpha}\lambda_jC_{0f})-\theta_{\epsilon}^{\alpha}(0))]/(\theta^{\alpha}_{1}(n^{1/\alpha}C_{0f})-\theta_{\epsilon}^{\alpha}(0))\\
&\geq \lim_{n\rightarrow\infty}\sum_{n^{1/\alpha}\lambda_jC_{0f}\geq r}(\theta^{\alpha}_{1}(n^{1/\alpha}\lambda_jC_{0f})-\theta_{\epsilon}^{\alpha}(0))/(\theta^{\alpha}_{1}(n^{1/\alpha}C_{0f})-\theta_{\epsilon}^{\alpha}(0))\\
&\geq\lim_{n\rightarrow\infty}\sum_{n^{1/\alpha}\lambda_jC_{0f}\geq r}\lambda_j(\theta^{\alpha}_{1}(n^{1/\alpha}C_{0f})-\theta_{\epsilon}^{\alpha}(0))/(\theta^{\alpha}_{1}(n^{1/\alpha}C_{0f})-\theta_{\epsilon}^{\alpha}(0))\\
&=\lim_{n\rightarrow\infty}\sum_{n^{1/\alpha}\lambda_jC_{0f}\geq r}\lambda_j\\
&=1.
\end{split}
\end{equation}}

This means that the most probable transition path for symmetric $\alpha$-stable L\'{e}vy process ($0<\alpha<1$) is a $Heaviside$-like function
\begin{equation}
 X^m_{t}\triangleq\begin{cases}
 X_{f},~~t\geq t^*,\\
 X_{0},~~t<t^*,\\
  \end{cases}
\end{equation}
or
\begin{equation}
 X^m_{t}\triangleq\begin{cases}
 X_{f},~~t> t^*,\\
 X_{0},~~t\leq t^*,\\
  \end{cases}
\end{equation}
where $t^*$  satisfies $T_0\leq t^* \leq T_f$. Hence in this case, the most probable transition path is not unique since the ``jump time'' $t^*$ can be chosen arbitrarily.

(\romannumeral2) For system ($\ref{Btequation}$) with $b\equiv0$, by Theorem $\ref{mtheorem}$  and  the fact that $\theta^b_{\epsilon}(\cdot)$ being  convex, we conclude that
\begin{equation}\label{browniancovex}
\begin{split}
\sum_{i=0}^{n-1}\theta^b_{\epsilon}(x_{t_{i+1}},x_{t_i})&=\sum_{i=0}^{n-1}\theta^b_{\epsilon}(|x_{{t_{i+1}}}-x_{t_i}|)\\
&\geq n\theta^b_{\epsilon}(\sum_{i=0}^{n-1}\frac{1}{n}|x_{{t_{i+1}}}-x_{t_i}|)\\
&=n\theta^b_{\epsilon}(\frac{1}{n}|X_{f}-X_{0}|).
\end{split}
\end{equation}
The inequality holds if and only if
\begin{equation}
\begin{split}
\theta^b_{\epsilon}(x_{t_1},x_{t_0})=\theta^b_{\epsilon}(x_{t_2},x_{t_1})=\cdots=\theta^b_{\epsilon}(x_{{t_n}},x_{{t_{n-1}}})\\
\Leftrightarrow x_{t_1}-x_{t_0} = x_{t_2}-x_{t_1} =\cdots= x_{{t_n}}-x_{{t_{n-1}}}.
\end{split}
\end{equation}
So the most probable transition path is the line segment  which connects the initial and final points. Therefore,  we obtain
\begin{align}
X^m_{t_i}=\frac{i(X_{f}-X_{0})}{n},~i=1,2,\cdots,n-1.
\end{align}
When $n$ goes to infinity, $X^m_{t}=X_{0}+\frac{t-T_0}{T_f-T_0}(X_{f}-X_{0})$. This implies that the  most probable transition path is the path for the particle (i.e., solution) moving in constant velocity.
\end{proof}

\begin{myrem}\label{remark1}
For symmetric $\alpha$-stable L\'{e}vy motion with $0<\alpha<1$, the proof of Corollary $\ref{lemmaconvex}$ compares the transition paths' action quantities in path-wise sense.  We now study the probability over all paths starting at $X_0$ at time $T_0$ and conditioned at a given end point $X_f$ at time $T_f$, to find the particle at point $X$ at time $t\in[T_0,T_f]$. This probability can be written as (without loss of generality we assume $X_f>X_0$)
\begin{equation}
\begin{split}
\mathcal{P}(X,t)&=\frac{p(X,t|X_0,T_0)p(X_f,T_f|X,t)}{p(X_f,T_f|X_0,T_0)}\\
&=\frac{1}{p(X_f,T_f|X_0,T_0)}\frac{1}{|t-T_0|^{1/\alpha}|T_f-t|^{1/\alpha}}f_{\alpha}(\frac{X-X_0}{|t-T_0|^{1/\alpha}})f_{\alpha}(\frac{X_f-X}{|T_f-t|^{1/\alpha}}),
\end{split}
\end{equation}
which was studied by $\cite{Delarue2017,zheng2017}$. So when $t$ is fixed, the probability $\mathcal{P}(X,t)$ is a function depending on $f_{\alpha}(\frac{X-X_0}{|t-T_0|^{1/\alpha}})f_{\alpha}(\frac{X_f-X}{|T_f-t|^{1/\alpha}})$.  Note  that $f_{\alpha}(\frac{X-X_0}{|t-T_0|^{1/\alpha}})$ has a peak at $X_0$, and $f_{\alpha}(\frac{X_f-X}{|T_f-t|^{1/\alpha}})$ has a peak at $X_f$. Thus the product $f_{\alpha}(\frac{X-X_0}{|t-T_0|^{1/\alpha}})f_{\alpha}(\frac{X_f-X}{|T_f-t|^{1/\alpha}})$ increases as $X\uparrow X_0$ and decreases as $X\downarrow X_f$. That is, the product reaches the global maximal value in $[X_0,X_f]$. Suppose that $X_0\leq X\leq X_f$. The product can be rewritten as
\begin{equation}
\begin{split}
f_{\alpha}(\frac{X-X_0}{|t-T_0|^{1/\alpha}})f_{\alpha}(\frac{X_f-X}{|T_f-t|^{1/\alpha}})=\exp(-(\theta^{\alpha}_{t-T_0}(X-X_0)+\theta^{\alpha}_{T_f-t}(X_f-X))).
\end{split}
\end{equation}
Hence
\begin{equation}
\begin{split}
&\theta^{\alpha}_{t-T_0}(X-X_0)+\theta^{\alpha}_{T_f-t}(X_f-X)\\
=&\theta^{\alpha}_{1}(\frac{X_f-X_0}{|t-T_0|^{1/\alpha}}\frac{X-X_0}{X_f-X_0})+\theta^{\alpha}_{1}(\frac{X_f-X_0}{|T_f-t|^{1/\alpha}}\frac{X_f-X}{X_f-X_0})\\
\geq&\frac{X-X_0}{X_f-X_0}\theta^{\alpha}_{1}(\frac{X_f-X_0}{|t-T_0|^{1/\alpha}})+\frac{X_f-X}{X_f-X_0}\theta^{\alpha}_{1}(0)+\frac{X_f-X}{X_f-X_0}\theta^{\alpha}_{1}(\frac{X_f-X_0}{|T_f-t|^{1/\alpha}})+\frac{X-X_0}{X_f-X_0}\theta^{\alpha}_{1}(0)\\
\geq&(\frac{X-X_0}{X_f-X_0}+\frac{X_f-X}{X_f-X_0})\min\{\theta^{\alpha}_{1}(\frac{X_f-X_0}{|t-T_0|^{1/\alpha}}),\theta^{\alpha}_{1}(\frac{X_f-X_0}{|T_f-t|^{1/\alpha}})\}+\theta^{\alpha}_{1}(0)\\
=&\min\{\theta^{\alpha}_{1}(\frac{X_f-X_0}{|t-T_0|^{1/\alpha}}),\theta^{\alpha}_{1}(\frac{X_f-X_0}{|T_f-t|^{1/\alpha}})\}+\theta^{\alpha}_{1}(0),
\end{split}
\end{equation}
where the first inequality approximately holds if the function $\theta_{1}^{\alpha}(\cdot)$ is approximately considered as a concave function in $[0,\infty]$ (or when $\frac{X_f-X_0}{|t-T_0|^{1/\alpha}}$ and $\frac{X_f-X_0}{|T_f-t|^{1/\alpha}}$ are large enough).

Thus $\mathcal{P}(X,t)$ reaches the maximal value at $X_0$ when $t\in[T_0,\frac{T_f+T_0}{2}]$, and it reaches the maximal value at $X_f$ when $t\in[\frac{T_f+T_0}{2},T_f]$. At time $\frac{T_f+T_0}{2}$, the maximal value of $\mathcal{P}(X,t)$ is reached at $X_0$ and $X_f$, simultaneously. It thus appears that the transition process jumps at the time instant $\frac{T_f+T_0}{2}$ most probably.

Inspired  by this related observation,    we could choose $t^*=\frac{T_f+T_0}{2}$  in Corollary $\ref{lemmaconvex}$, considering the transition process in time-point-wise sense. This is one  plausible option that leads to the specific most probable path.
\end{myrem}

\begin{corollary}\label{lemmaconvex2}
For symmetric $\alpha$-stable L\'{e}vy motions with $0<\alpha<1$, in $n$-partition path integral representation $(\ref{pdf})$ (that is, the time interval $[T_0,T_f]$ has partition: $T_0=t_0\leq t_1\leq t_2\leq\cdots\leq t_n=T_f$), if the action quantity $\mathcal{S}_n(x_t)$ of a path $x_t$ has more than one non-zero term, then we have
\begin{equation}
\begin{split}
\lim_{n\rightarrow\infty}\frac{S_n(x_t)-n\theta_{\epsilon}^{\alpha}(0)}{S_{n}(u_t)-n\theta_{\epsilon}^{\alpha}(0)}=+\infty.
\end{split}
\end{equation}
Here $u_t\in D_{X_0}^{X_f}$ satisfies
\begin{equation}
\begin{split}
du_t=0,~t\in[T_0,T_f]\setminus\{t^*\},
\end{split}
\end{equation}
where $t^*\in[T_0,T_f]$.
\end{corollary}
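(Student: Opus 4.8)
The plan is to compare the growth rates in $n$ of the numerator and the denominator separately, reusing the scaling identity $\theta^{\alpha}_{\epsilon}(|z|)=\theta^{\alpha}_{1}\bigl(n^{1/\alpha}|z|/(T_f-T_0)^{1/\alpha}\bigr)$ and the concavity of $\theta^{\alpha}_{1}(\cdot)-\theta^{\alpha}_{1}(0)$ on $[r,\infty)$ already exploited for Corollary~\ref{lemmaconvex}. \emph{Step 1 (the denominator).} Since $u_t$ makes a single jump from $X_0$ to $X_f$ at $t^*$, once $n$ is large enough that $t^*$ lies in the interior of one subinterval, only one of the $n$ summands of $\mathcal{S}_n(u_t)$ is nontrivial, so
\[
\mathcal{S}_n(u_t)-n\theta^{\alpha}_{\epsilon}(0)=\theta^{\alpha}_{\epsilon}(|X_f-X_0|)-\theta^{\alpha}_{\epsilon}(0)=\theta^{\alpha}_{1}(n^{1/\alpha}C_{0f})-\theta^{\alpha}_{1}(0).
\]
Inserting the tail estimate from \eqref{asymptotic}, $f_{\alpha}(y)=Cy^{-\alpha-1}(1+o(1))$ as $y\to\infty$, gives $\theta^{\alpha}_{1}(y)=-\ln f_{\alpha}(y)=(\alpha+1)\ln y-\ln C+o(1)$, hence the denominator equals $\tfrac{\alpha+1}{\alpha}\ln n+O(1)$: it grows only logarithmically in $n$.

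\emph{Step 2 (the numerator).} Write $\delta_i=x_{t_{i+1}}-x_{t_i}$. Since $\theta^{\alpha}_{1}(\cdot)-\theta^{\alpha}_{1}(0)\ge 0$ (monotonicity of $\theta^{\alpha}_{1}$, Section~2.2), every summand is non-negative and only the non-zero increments contribute, so
\[
\mathcal{S}_n(x_t)-n\theta^{\alpha}_{\epsilon}(0)=\sum_{i:\,\delta_i\neq0}\Bigl[\theta^{\alpha}_{1}\bigl(n^{1/\alpha}|\delta_i|/(T_f-T_0)^{1/\alpha}\bigr)-\theta^{\alpha}_{1}(0)\Bigr]\ \ge\ \sum_{i:\,n^{1/\alpha}|\delta_i|/(T_f-T_0)^{1/\alpha}\ge r}\Bigl[\theta^{\alpha}_{1}\bigl(n^{1/\alpha}|\delta_i|/(T_f-T_0)^{1/\alpha}\bigr)-\theta^{\alpha}_{1}(0)\Bigr],
\]
the sub-threshold increments being dropped exactly as in the proof of Corollary~\ref{lemmaconvex}. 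Each retained term is at least the fixed positive constant $\theta^{\alpha}_{1}(r)-\theta^{\alpha}_{1}(0)$, and by the computation of Step~1 it is of order $\ln n$ whenever the corresponding rescaled increment tends to $\infty$. The hypothesis that $\mathcal{S}_n(x_t)$ has more than one non-zero term, carried along the refinement of the partition, means precisely that $x_t$ is not the single jump $u_t$, so it keeps spreading the total increment $X_f-X_0$ over an unboundedly growing number of subintervals; in the decisive case of a path with a nontrivial continuous monotone piece one has $|\delta_i|\asymp 1/n$ on order $n$ subintervals, each retained term is then of order $\tfrac{(\alpha+1)(1-\alpha)}{\alpha}\ln n$ (still divergent, since $\alpha<1$), and the numerator is of order $n\ln n$. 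Dividing by the $O(\ln n)$ denominator of Step~1 gives the limit $+\infty$. If convenient, one may first reduce to monotone $x_t$ via Theorem~\ref{mtheorem}, since a non-monotone path only enlarges the numerator.

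\emph{Main obstacle.} The crux is the passage in Step~2 from ``$\mathcal{S}_n(x_t)$ has more than one non-zero term'' to a numerator of order $\omega(\ln n)$: one must argue that a non-Heaviside path cannot keep its variation concentrated in a bounded number of subinterval increments for all large $n$, and control the retained terms uniformly in $n$. The cleanest and least ambiguous instance is the continuous competitor above, where $n\ln n\gg\ln n$ is immediate; the remaining ingredients — the exact value of the denominator, extracting $\theta^{\alpha}_{1}(y)\sim(\alpha+1)\ln y$ from \eqref{asymptotic}, discarding sub-threshold increments by non-negativity, and the monotone reduction — are routine variants of the estimates already carried out for Corollary~\ref{lemmaconvex}.
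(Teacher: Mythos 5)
Your Step 1 is correct and is actually sharper than the corresponding step in the paper: from (\ref{asymptotic}) you correctly extract $\theta^{\alpha}_{1}(y)=(\alpha+1)\ln y+O(1)$, so the denominator is $\tfrac{\alpha+1}{\alpha}\ln n+O(1)$, and the continuous-competitor half of Step 2 (order $n$ retained increments, each contributing $\asymp\ln n$, total $\asymp n\ln n$) is also sound. But the gap you flag as the ``main obstacle'' is a genuine gap, and it cannot be closed, because the statement read literally is false in exactly the case you leave open. Take $x_t$ with precisely two jumps of fixed sizes $C_1,C_2>0$ and constant in between: for all large $n$ its discretization has exactly two non-zero increments, so the hypothesis holds, yet by your own Step 1 asymptotics the numerator is $\tfrac{2(\alpha+1)}{\alpha}\ln n+O(1)$ and the ratio converges to $2$, not $+\infty$. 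The true dichotomy is ``boundedly many non-negligible increments'' (ratio tends to the number of jumps, a finite constant $\geq 2$) versus ``unboundedly many'' (your $n\ln n$ versus $\ln n$ argument applies); no uniform control over the retained terms will convert the first case into a divergent ratio.

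Comparing with the paper's own proof is instructive here. The paper disposes of the two-jump case via the display (\ref{order}), which rewrites the quotient $[\theta^{\alpha}_{\epsilon}(C_1)+\theta^{\alpha}_{\epsilon}(C_2)-2\theta^{\alpha}_{\epsilon}(0)]/[\theta^{\alpha}_{\epsilon}(C_3)-\theta^{\alpha}_{\epsilon}(0)]$ as a single logarithm of a difference --- i.e.\ it treats a ratio of logarithms as if it were a difference of logarithms. That manipulation is invalid, and your (correct) expansion shows the quotient in fact tends to $2$. So your honest accounting does not reflect a weakness of your write-up relative to the paper; it exposes that the paper's proof, and the corollary in the form stated, are flawed. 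A repairable version would either restrict the hypothesis to paths whose number of non-negligible increments grows with $n$ (for which your Step 2 is already a complete proof), or weaken the conclusion for a fixed number $k\geq 2$ of jumps to $\lim$ of the ratio $=k$ (more generally $\liminf\geq k$); the latter still yields the intended consequence that single-jump paths minimize the action quantity to leading order, which is all that Theorem \ref{theoremconcave}(\romannumeral1) uses, but it is strictly weaker than the stated $+\infty$.
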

\begin{proof}
Suppose that $C_1,C_2,C_3$ are positive constants. We obtain
\begin{equation}\label{order}
\begin{split}
&[\theta^{\alpha}_{\epsilon}(C_1)+\theta^{\alpha}_{\epsilon}(C_2)-2\theta^{\alpha}_{\epsilon}(0)]/(\theta^{\alpha}_{\epsilon}(C_3)-\theta^{\alpha}_{\epsilon}(0))\\
=&[-\ln f_{\alpha}(\frac{C_1}{\epsilon^{1/\alpha}})-\ln f_{\alpha}(\frac{C_2}{\epsilon^{1/\alpha}})+2\ln f_{\alpha}(0)]/[-\ln f_{\alpha}(\frac{C_3}{\epsilon^{1/\alpha}})+\ln f_{\alpha}(0)]\\
=&\ln \{f^2_{\alpha}(0)/[f_{\alpha}(\frac{C_1}{\epsilon^{1/\alpha}})f_{\alpha}(\frac{C_2}{\epsilon^{1/\alpha}})]-f_{\alpha}(0)/(f_{\alpha}(\frac{C_3}{\epsilon^{1/\alpha}}))\}\\
\sim & \ln [C^{-2}f^{2}_{\alpha}(0)(\frac{C_1}{\epsilon^{1/\alpha}})^{1+\alpha}(\frac{C_2}{\epsilon^{1/\alpha}})^{1+\alpha}-C^{-1}f_{\alpha}(0)(\frac{C_3}{\epsilon^{1/\alpha}})^{1+\alpha}]\\
=&\ln [C^{-2}f^{2}_{\alpha}(0)(\frac{C_1C_2}{\epsilon^{2/\alpha}})^{1+\alpha}-C^{-1}f_{\alpha}(0)(\frac{C_3}{\epsilon^{1/\alpha}})^{1+\alpha}]\rightarrow+\infty~(\epsilon\rightarrow0).
\end{split}
\end{equation}

The $\sim$ part and the positive constant $C$ come from the asymptotic behavior of  $f_{\alpha}(\cdot)$ in ($\ref{asymptotic}$). The formula ($\ref{order}$) means that when the jump number of a path is greater, the action quantity of that path has higher order. In $n$-partition path integral representation, if the action quantity $S_n(x_t)=\sum_{i=0}^{n-1}\theta^{\alpha}_{\epsilon}(x_{{t_{i+1}}},x_{t_i})$ of a path $x_t$ has more than one non-zero term, without loss of generality, we assume $\theta^{\alpha}_{\epsilon}(x_{{t_{1}}},x_{t_0})\neq0$ and $\theta^{\alpha}_{\epsilon}(x_{{t_{2}}},x_{t_1})\neq0$. Construct a path $\tilde{x}_t$:
\begin{equation}
 \tilde{x}_t=\begin{cases}
 X_{0},~~T_0\leq t<t_1,\\
 x_{t_1},~~t_1\leq t< t_2,\\
 x_{t_2},~~t_2\leq t<T_f,\\
 X_{f},~~t=T_f.
  \end{cases}
\end{equation}
Applying Corollary $(\ref{lemmaconvex})$ for these two paths in three intervals: $T_0\leq t<t_1$, $t_1\leq t< t_2$, and $t_2\leq t\leq T_f$,  we have
$$\lim_{n\rightarrow\infty}\frac{S_n(x_t)-n\theta_{\epsilon}^{\alpha}(0)}{S_n(\tilde{x}_t)-n\theta_{\epsilon}^{\alpha}(0)}\geq1.$$
According to ($\ref{order}$),
\begin{equation}
\begin{split}
\lim_{n\rightarrow\infty}\frac{S_n(\tilde{x}_t)-n\theta_{\epsilon}^{\alpha}(0)}{S_{n}(u_t)-n\theta_{\epsilon}^{\alpha}(0)}=+\infty,
\end{split}
\end{equation}
where $u_t$ satisfies $du_t=0,~t\in[T_0,T_f]\setminus\{t^*\}$ for any $t^*\in[T_0,T_f]$.
\end{proof}

\subsection{Most probable transition path in  the case of non-zero drift}
\begin{myTheo}\label{theoremconcave}
(Characterization of the most probable transition path with drift term) \\
For system ($\ref{firstequation}$) and system ($\ref{Btequation}$), assume that the transition probability density   exists, and that  the most probable transition path  $X^m_{t}$ exists and satisfies the integrability condition  $|\int_{T_0}^{t}b(X^m_{s})ds|<\infty$ for $t\in[T_0,T_f]$.

(\romannumeral1) For system ($\ref{firstequation}$) with $L_t$   a symmetric $\alpha$-stable L\'{e}vy motion with $(0<\alpha<1)$,   the most probable transition path  $X^m_{t}$ is determined by the following deterministic dynamical system (i.e., an ordinary differential equation),
\begin{equation}
\begin{cases}
dX^m_{t}-b(X^m_{t})dt=0,~t\in [T_0,T_f]\backslash\{t^*\},\\
X^m_{T_0}=X_{0},~X^m_{T_f}=X_{f},\\
\end{cases}
\end{equation}

(\romannumeral2) For system ($\ref{Btequation}$) with Brownian motion,  the most probable transition path $X^m_t$ is determined by the following deterministic dynamical system (i.e., an integral-differential equation),
\begin{equation}\label{browniancasepath}
  \begin{split}
X^m_t-X_{0}-\int_{T_0}^{t}b(X^m_s)ds=\frac{t-T_0}{T_f-T_0}(X_{f}-X_{0}-\int_{T_0}^{T_f}b(X^m_s)ds),
  \end{split}
\end{equation}
{\color{red}if $|X_{f}-X_{0}-\int_{T_0}^{T_f}b(X^m_s)ds|\leq \inf_{U_t\in D_{X_0}^{X_f}}|X_{f}-X_{0}-\int_{T_0}^{T_f}b(U_s)ds|$.}
\end{myTheo}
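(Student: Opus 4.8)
The plan is to reduce both cases to the drift-free Corollary \ref{lemmaconvex} and Corollary \ref{lemmaconvex2} by a change of variables at the level of the discretized path integral. For a partition $T_0=t_0\le\cdots\le t_n=T_f$ with mesh $\epsilon=(T_f-T_0)/n$, the Euler (It\^{o}) discretization of the transition density of (\ref{firstequation}) is $p_\epsilon(x_{t_{i+1}}|x_{t_i})=\epsilon^{-1/\alpha}f_\alpha\big((x_{t_{i+1}}-x_{t_i}-b(x_{t_i})\epsilon)/\epsilon^{1/\alpha}\big)$, so the analogue of (\ref{pdf}) with the drift retained gives, for a path $x$ connecting $(T_0,X_0)$ and $(T_f,X_f)$, the action quantity $\mathcal{S}_n(x)=\sum_{i=0}^{n-1}\theta_\epsilon^\alpha\big(x_{t_{i+1}}-x_{t_i}-b(x_{t_i})\epsilon\big)$ (and similarly with $\theta^b_\epsilon$ for (\ref{Btequation})). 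First I would introduce the transformed discretized path $y_{t_i}:=x_{t_i}-\sum_{j=0}^{i-1}b(x_{t_j})\epsilon$, for which $y_{t_{i+1}}-y_{t_i}=x_{t_{i+1}}-x_{t_i}-b(x_{t_i})\epsilon$, hence
\[
\mathcal{S}_n(x)=\sum_{i=0}^{n-1}\theta_\epsilon\big(y_{t_{i+1}}-y_{t_i}\big);
\]
that is, $\mathcal{S}_n(x)$ is exactly the \emph{drift-free} action quantity of the path $y$. Under the integrability hypothesis $|\int_{T_0}^{t}b(X^m_s)\,ds|<\infty$ one has $y_t\to x_t-\int_{T_0}^{t}b(x_s)\,ds$ as $n\to\infty$, with $y_{T_0}=X_0$ and $y_{T_f}=X_f-\int_{T_0}^{T_f}b(x_s)\,ds$; since $b$ satisfies \textbf{C1}--\textbf{C2}, the integral equation $x_t=y_t+\int_{T_0}^{t}b(x_s)\,ds$ has a unique solution, so $x\mapsto y$ is a bijection and minimizing $\mathcal{S}_n$ over $x$ amounts to minimizing the drift-free action quantity over the corresponding paths $y$.

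For part (\romannumeral1) I would then invoke Corollary \ref{lemmaconvex2}: any non-constant $y$ whose action quantity has more than one non-zero increment term is of strictly higher order than one with a single non-zero increment, so a minimizing $y$ is a $Heaviside$-like path with a single jump at some $t^*\in[T_0,T_f]$ (in the degenerate case where the deterministic flow from $X_0$ already reaches $X_f$ at $T_f$, $y$ is constant and the conclusion holds with any $t^*$), i.e.\ $dy_t=0$ on $[T_0,T_f]\setminus\{t^*\}$. Undoing the substitution yields $dX^m_t-b(X^m_t)\,dt=0$ on $[T_0,T_f]\setminus\{t^*\}$ together with $X^m_{T_0}=X_0$, $X^m_{T_f}=X_f$. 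Finally, by the tail asymptotics (\ref{asymptotic}) the leading order of the single-jump action quantity $\theta^\alpha_1\big(n^{1/\alpha}|y_{T_f}-y_{T_0}|\big)-\theta^\alpha_\epsilon(0)$ is independent of the size of the jump (the jump size enters only lower-order terms), so every admissible $t^*$ produces a minimizer, which reproduces the non-uniqueness of Corollary \ref{lemmaconvex}(\romannumeral1).

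For part (\romannumeral2) I would use convexity of $\theta^b_\epsilon$ exactly as in (\ref{browniancovex}): Jensen's inequality gives $\mathcal{S}_n(x)=\sum_i\theta^b_\epsilon(y_{t_{i+1}}-y_{t_i})\ge n\,\theta^b_\epsilon\big(|y_{T_f}-y_{T_0}|/n\big)=\tfrac{1}{2(T_f-T_0)}\big|X_f-X_0-\int_{T_0}^{T_f}b(x_s)\,ds\big|^2$, with equality iff $y$ is the line segment from $X_0$ to $y_{T_f}$. Writing $\Delta(x)=X_f-X_0-\int_{T_0}^{T_f}b(x_s)\,ds$, this gives $\mathcal{S}_n(x)\ge \Delta(x)^2/(2(T_f-T_0))\ge \big(\inf_{U\in D_{X_0}^{X_f}}|\Delta(U)|\big)^2/(2(T_f-T_0))$ for every admissible $x$. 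Hence if $X^m$ attains that infimum (which is precisely the stated hypothesis, the reverse inequality being automatic since $X^m\in D_{X_0}^{X_f}$), then $\mathcal{S}_n(X^m)$ equals the lower bound, forcing $y(X^m)$ to be the segment from $X_0$ to $X_0+\Delta(X^m)$; undoing the substitution gives $X^m_t-\int_{T_0}^{t}b(X^m_s)\,ds=X_0+\frac{t-T_0}{T_f-T_0}\big(X_f-X_0-\int_{T_0}^{T_f}b(X^m_s)\,ds\big)$, which is (\ref{browniancasepath}). Conversely one checks that any solution of (\ref{browniancasepath}) lies in $D_{X_0}^{X_f}$ and has $y$ equal to this segment, so it realizes the bound.

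The main obstacle is the self-referential coupling created by the substitution: the target $y_{T_f}=X_f-\int_{T_0}^{T_f}b(x_s)\,ds$ of the transformed path depends on the whole original path, so the boundary conditions do not decouple and Corollary \ref{lemmaconvex} cannot be quoted for a prescribed end point; the hypothesis $|X_f-X_0-\int_{T_0}^{T_f}b(X^m_s)\,ds|\le\inf_{U_t\in D_{X_0}^{X_f}}|X_f-X_0-\int_{T_0}^{T_f}b(U_s)\,ds|$ is exactly what closes this loop in the Brownian case. The remaining delicate points are (a) justifying the interchange of $\lim_{n\to\infty}$ with the minimization, so that the discrete minimizers converge to a path solving the limiting ordinary/integral equation, using \textbf{C1}--\textbf{C2} and the integrability assumption, and (b) establishing that the fixed-point equation (\ref{browniancasepath}) actually possesses a solution, e.g.\ by a contraction argument on $[T_0,T_f]$ when $b$ is globally Lipschitz.
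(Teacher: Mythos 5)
Your proposal follows essentially the same route as the paper: the discretized substitution $y_{t_i}=x_{t_i}-\sum_{j<i}b(x_{t_j})\epsilon$, which converts the drifted action quantity into the drift-free action quantity of $y$, followed by Corollary \ref{lemmaconvex2} for the $\alpha$-stable case and the convexity/Jensen argument of Corollary \ref{lemmaconvex}(\romannumeral2) for the Brownian case, with the red hypothesis invoked precisely to close the loop created by the path-dependent endpoint $y_{T_f}=X_f-\int_{T_0}^{T_f}b(x_s)\,ds$. The one substantive divergence is the selection of the jump time $t^*$ in part (\romannumeral1): you assert that the jump size enters only at lower order, so that every $t^*$ yields a minimizer, whereas the paper argues (appealing to the computation (\ref{order}) with $C_2=0$) that the order of the action quantity depends on the jump size $J_t$ and therefore selects $t^*$ as a minimizer of $t\mapsto J_t$, i.e.\ the instant at which the forward orbit from $X_0$ and the backward orbit from $X_f$ are closest. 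Both positions have some justification: at the leading $\ln(1/\epsilon)$ order the jump size indeed cancels in the ratio criterion (\ref{s2}), which supports your claim, but for each fixed $n$ the function $\theta^{\alpha}_{\epsilon}$ is strictly increasing, so the single-jump path with the smaller jump has strictly smaller action quantity $\mathcal{S}_n$, which supports the paper's refinement under exact minimization. Since the theorem statement leaves $t^*$ unspecified, your argument still establishes the claim as stated, but it loses the paper's characterization of the jump time. Your closing caveats — the interchange of $\lim_{n\to\infty}$ with the minimization, and the existence of a solution to the fixed-point equation (\ref{browniancasepath}) — identify genuine gaps, but they are gaps the paper's own proof does not address either.
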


\begin{proof}
For the system ($\ref{firstequation}$), the corresponding stochastic integral equation is
\begin{equation}\label{variableexchange}
\begin{split}
&X_t=\int_{T_0}^{t}b(X_s)ds+L_t,\\
&Y_t\triangleq X_t-\int_{T_0}^{t}b(X_s)ds=L_t,
\end{split}
\end{equation}
and the differential form is
\begin{equation}
\begin{split}
X_{t_{i+1}}-X_{t_{i}}-[\gamma b(X_{t_{i+1}})+(1-\gamma) b(X_{t_i})]\triangle t=L_{t_{i+1}}-L_{t_i}.  \\
\end{split}
\end{equation}
In It\^{o} interpretation ($\gamma=0$),
\begin{equation}
\begin{split}
X_{t_{i+1}}-X_{t_i}-b(X_{t_i})\triangle t=L_{t_{i+1}}-L_{t_i}.   \\
\end{split}
\end{equation}
The transition probability density function of $X_t$ is
\begin{equation}
\begin{split}
&p(X_{f},T_f|X_{0},T_0)\\
=&\int_{-\infty}^{\infty}\cdots\int_{-\infty}^{\infty}dL_1\cdots dL_{n-1}dL_{n}p_{\epsilon}(L_1|L_{T_0})p_{\epsilon}(L_2|L_1)\cdots p_{\epsilon}(L_{n}|L_{n-1})\delta(x_{t_n}-X_{f})\\
=&\int \mathcal{D}_{n+1} x\mathcal{J}\exp\{-\sum_{i=0}^{n}\theta^{\alpha}_{\epsilon}(x_{{t_{i+1}}}-x_{t_i}-b(x_{t_i})\triangle t)\}\delta(x_{t_n}-X_{f})\\
=&\int \mathcal{D}_{n+1} x\exp\{-\sum_{i=0}^{n}\theta^{\alpha}_{\epsilon}(x_{{t_{i+1}}}-x_{t_i}-b(x_{t_i})\triangle t)\}\delta(x_{t_n}-X_{f})\\
=&\int \mathcal{D}_{n+1} y\exp\{-\sum_{i=0}^{n}\theta^{\alpha}_{\epsilon}(y_{{t_{i+1}}},y_{t_i})\}\delta(x_{t_n}-X_{f}),
\end{split}
\end{equation}
where $\mathcal{J}$ is the Jacobian of the   transformation given by
\begin{equation}
\begin{split}
\mathcal{J}=\det(\frac{\partial L_i}{\partial x_k})=\prod_{i=1}^n(1-\epsilon\gamma\frac{db(x_i)}{dx_i}).
\end{split}
\end{equation}

Assume that the most probable transition path of $X_t$ exist, which is denoted by $X^m_t$.

(\romannumeral1)  For system ($\ref{firstequation}$), in order  to determine the most probable transition path $X^m_t$,  we consider the transition of the process $Y_t$ from $Y_{T_0}=X_0$ to $Y_{T_f}=X_{f}-\int_{T_0}^{T_f}b(X^m_s)ds$. We should notice that the transition process of $Y_t$ is different from the one of $X_t$.  Given the quantities $\{X_0,X_f,T_0,T_f\}$. The diffusion process $X_t$ transfers from initial point $X_0$ at time $T_0$ to terminal point $X_f$ at time $T_f$. That is, all transition paths have the same initial and terminal points. But for process $Y_t$, the transition paths set of $Y_t$ is
\begin{align}
D_{Y}=\{y_t:y_t=x_t-\int_{T_0}^{t}b(x_s)ds,~x_t\in D_{X_0}^{X_f}\}.
\end{align}
Thus the paths in $D_Y$ have the same initial point $X_0$ but their terminal points may be different.

Since $L_t$ is an $\alpha$-stable L\'{e}vy motion with $0<\alpha<1$, by Corollary $\ref{lemmaconvex2}$, the most probable transition path of $Y_t$ (denoted by $Y^m_t$) among $D_{Y}$ is presumably to satisfy
\begin{equation}
\begin{split}
dY^m_t=0,~t\in[T_0,T_f]\setminus\{t^*\},
\end{split}
\end{equation}
where $t^*\in[T_0,T_f]$.

That  is
\begin{equation}
\left\{
\begin{array}{l}
dX^m_t-b(X^m_t)dt=0,\\
X^m_{T_0}=X_{0},~X^m_{T_f}=X_{f},~t\in [T_0,T_f]\backslash\{t^*\}.  \\
\end{array}
\right.
\end{equation}

It means that the most probable transition path has one jump and the jump size is $J_{t^*}=|X_0-\int_{T_0}^{t^*}b(X^m_s)ds-X_f+\int_{T_f}^{t^*}b(X^m_s)ds|$. In the proof of Corollary $\ref{lemmaconvex2}$, if $C_1>C_3>0$ and $C_2=0$, the limit is still  the infinity. This means the order of action quantity depends on the jump size.

So the jump time $t^*$ is supposed to be the one which satisfies
\begin{equation}
\begin{split}
J_{t^*}=\min_{T_0\leq t\leq T_f}J_t.
\end{split}
\end{equation}
As $J_t$ is continuous in $t$, the minimizer $t^*$ exists (although it may not be unique).

In other words, the most probable transition path consists of two components: One component is part of the solution of equation
\begin{equation}\label{path1}
\left\{
\begin{array}{l}
dX(t)-b(X(t))dt=0,~t\in [T_0,T_f],\\
X(T_0)=X_{0},\\
\end{array}
\right.
\end{equation}
and the other component is part of the solution of equation
\begin{equation}\label{path2}
\left\{
\begin{array}{l}
dX(t)-b(X(t))dt=0,~t\in [T_0,T_f],\\
X(T_f)=X_{f}.\\
\end{array}
\right.
\end{equation}
The jump time $t^*$ is the moment at which $J_{t^*}$ is minimum.

(\romannumeral2) For  system ($\ref{Btequation}$), if the initial and terminal points are deterministic, then by Corollary $\ref{lemmaconvex}$, the most probable transition path of a Brownian motion is the one which connects the initial and terminal points directly. Actually the action quantity of this most probable transition path can be computed exactly provided the equal time partition in ($\ref{browniancovex}$). That is,
\begin{equation}
\begin{split}
&n\theta^b_{\epsilon}(\frac{1}{n}|X_{f}-X_{0}|)\\
=&n\frac{(|X_{f}-X_{0}|/n)^2}{2\epsilon}\\
=&n\frac{(|X_{f}-X_{0}|/n)^2}{2\frac{T_f-T_0}{n}}\\
=&\frac{1}{2}\frac{(X_{f}-X_{0})^2}{T_f-T_0}.
\end{split}
\end{equation}

Notice that $|Y_{T_f}-Y_{T_0}|=|X_{f}-X_{0}-\int_{T_0}^{T_f}b(X_s)ds|$ is the distance between the initial and terminal points of $Y_t$. Then the most probable transition path of $Y_t$  (denoted by $Y^m_t$)  can  be obtained exactly,
\begin{equation}\label{bre}
\begin{split}
Y^m_t&= X^m_t-X_{0}-\int_{T_0}^{t}b(X^m_s)ds\\
&=Y_{0}+(Y_{f}-Y_{0})\frac{t-T_0}{T_f-T_0}\\
&=\frac{t-T_0}{T_f-T_0}(X_{f}-X_{0}-\int_{T_0}^{T_f}b(X^m_s)ds).
\end{split}
\end{equation}
This means if there is a transition path $X^m_t\in D_{X_0}^{X_f}$ such that the formula $(\ref{bre})$ holds, then the path $X^m_t$ is the most probable one among the paths whose terminal point is $X_f-\int_{T_0}^{T_f}b(X^m_s)ds$. So if $|X_{f}-X_{0}-\int_{T_0}^{T_f}b(X^m_s)ds|\leq \inf_{U_t\in D_{X_0}^{X_f}}|X_{f}-X_{0}-\int_{T_0}^{T_f}b(U_s)ds|$, then $X^m_t$ will be the most probable transition path of $X_t$.

This completes the proof of this theorem.
\end{proof}

\begin{myrem}
For a symmetric $\alpha$-stable L\'{e}vy motion with $1\leq\alpha<2$, the equalities $(\ref{control})$ do not hold.  Define the path space $\mbox{DMS}_{X_{0}}^{X_{f}}\triangleq\{x_t|~x_t=\sum_{i=0}^{n-1}a_i \mathbb{I}_{[t_i,t_{i+1})}+a_n \mathbb{I}_{\{T_f\}}, T_0=t_0\leq t_1\leq \cdots\leq t_n=T_f, X_{0}=a_0\leq a_1\leq \cdots\leq a_n=X_{f}\}$, where $\mathbb{I}_{B}$ is the characteristic function of set $B\subset \mathbb{R}$.

If we search for the most probable transition path within the simple path space $DMS_{X_{0}}^{X_{f}}$ for the symmetric $\alpha$-stable L\'{e}vy motion with $1\leq\alpha<2$, the similar results  of Lemma $\ref{lemmaconvex}$ and Theorem $\ref{theoremconcave}$ hold. This is because  for every  fixed simple path   in $DMS_{X_{0}}^{X_{f}}$, the non-zero $\lambda_j$ of $\{\lambda_j\}_{j=0}^{n-1}$ are bounded below. Thus the inequality ($\ref{concavity}$) holds for every  non-negative $\lambda_j$,   for $n$ large enough.
\end{myrem}

\begin{myrem}
For simplicity, we call the solution of ($\ref{path1}$) the \textbf{initial transition path}, and call the solution of ($\ref{path2}$) the \textbf{final transition path}. Consequently, the most probable transition path starts from the initial transition path and jumps to the final transition path at the time that initial transition path and final transition path are closed to each other.
\end{myrem}

\begin{myrem}
For  the case with a symmetric $\alpha$-stable L\'{e}vy motion  ($0<\alpha<1$), we are interested in the transitions     between metastable states of the stochastic dynamical system $(\ref{firstequation})$. That is, the initial and terminal points $X_0$ and $X_f$ are stable points of the corresponding undisturbed system of $\ref{firstequation}$. In this case, the initial transition path is $X_t=X_0,~t\in[T_0,T_f]$ and the final transition path is $X_t=X_f,~t\in[T_0,T_f]$ by Theorem $\ref{theoremconcave}$ (\romannumeral1).

Thus the process $Y_t=X_t+\int_{T_0}^{T_f}b(X_s)ds$ is a symmetric $\alpha$-stable L\'{e}vy motion which transits from $X_0$ to $X_f$ most probably. The most probable transition path of $Y_t$ provided the initial and terminal points $X_0$ and $X_f$ (as discussed in Remark $\ref{remark1}$) is
\begin{equation}
\begin{cases}
Y^m_{t}=X_0,~t\in [T_0,\frac{T_f+T_0}{2}),\\
Y^m_{t}=X_f,~t\in [\frac{T_f+T_0}{2},T_f].\\
\end{cases}
\end{equation}
Thus in Theorem $\ref{theoremconcave}$ (\romannumeral1), if the initial and terminal points $X_0$ and $X_f$ are metastable points of the system, the time $\frac{T_f+T_0}{2}$ can also be considered as the most probable jump time $t^*$.
\end{myrem}

\begin{myrem}\label{remark5}
For  the case with Brownian motion, the condition $|X_{f}-X_{0}-\int_{T_0}^{T_f}b(X^m_s)ds|\leq \inf_{U_t\in D_{X_0}^{X_f}}|X_{f}-X_{0}-\int_{T_0}^{T_f}b(U_s)ds|$ is not easy to verify. The action functional in It\^{o} sense is $\mathcal{L}(x,\dot{x})=\frac{1}{2}(\dot{x}-b(x))^2$. If we assume the most probable path and the function $b(x)$ are smooth enough, then the Euler-Lagrange equation is
\begin{equation}
\begin{split}
&~~\frac{\partial}{\partial t}\frac{\partial \mathcal{L}}{\partial \dot{x}}=\frac{\partial \mathcal{L}}{\partial x}\\
\Rightarrow&~~\ddot{x}-b'(x)\dot{x}=-(\dot{x}-b(x))b'(x)\\
\Rightarrow&~~\ddot{x}-b'(x)b(x)=0.\\
\end{split}
\end{equation}
And in our method,
\begin{equation}
  \begin{split}
&~~X^m_t-X_{0}-\int_{T_0}^{t}b(X^m_s)ds=\frac{t-T_0}{T_f-T_0}(X_{f}-X_{0}-\int_{T_0}^{T_f}b(X^m_s)ds)\\
\Rightarrow&~~\dot{X}^m_t-b(X^m_t)=\frac{1}{T_f-T_0}(X_{f}-X_{0}-\int_{T_0}^{T_f}b(X^m_s)ds)\\
\Rightarrow&~~\ddot{X}^m_t-b'(X^m_t)\dot{X}^m_t=0.\\
  \end{split}
\end{equation}
So in   general,   the path of ($\ref{browniancasepath}$)  does not coincide with direct
  minimizer of the Onsager-Machlup's functional. Our method is restricted here because the condition $|X_{f}-X_{0}-\int_{T_0}^{T_f}b(X^m_s)ds|\leq \inf_{U_t\in D_{X_0}^{X_f}}|X_{f}-X_{0}-\int_{T_0}^{T_f}b(U_s)ds|$ is not satisfied. But if the drift term $b(x)$ is independent of $x$, the integration $\int_{T_0}^{T_f}b(U_s)ds$ is a constant for any path $U_t\in D_{X_0}^{X_f}$, and the two methods provide the same most probable path.
\end{myrem}

\subsection{Existence, uniqueness and numerical simulation for the most probable transition path.}
\subsubsection{Non-Gaussian noise: System with a symmetric $\alpha$-stable L\'{e}vy motion with $0<\alpha<1$.}

\par The  most probable transition path is determined by  two ``initial" value problems  of a deterministic ordinary differential equation (ODE)
\begin{equation}\label{ode1}
\begin{cases}
dX^m_t-b(X^m_t)dt=0,~t\in [T_0,T_f]\backslash\{t^*\},\\
X^m_{T_0}=X_{0},~X^m_{T_f}=X_{f},~t^*\in[T_0,T_f].\\
\end{cases}
\end{equation}
The first initial value problem  solves this ODE with  $X^m_{T_0}=X_{0}$, and the second problem solves this ODE backward in time with terminal value condition  $X^m_{T_f}=X_{f}$.
The existence and uniqueness of these solutions are ensured  by the local Lipschitz continuity of the drift term $b(x)$.

Given a time partition $\{T_0=t_0<t_1<\cdots<t_n=T_f\}$,  we simulate the most probable transition path as follows:   forward Euler scheme
\begin{align}\label{initialtransition}
x_{t_{i+1}}-x_{t_{i}}=b(x_{t_{i}})\Delta t,~T_0\leq t_i<t_{i+1}<t^*,~x_{T_0}=X_{0},
\end{align}
and
\begin{align}\label{finaltransition}
x_{t_{i+1}}-x_{t_{i}}=b(x_{t_{i}})\Delta t,~t^*\leq t_i<t_{i+1}\leq T_f,~x_{T_f}=X_{f}.
\end{align}

The initial transition path simulated by ($\ref{initialtransition}$) can be computed easily. Computing the final transition path   by ($\ref{finaltransition}$) is a little complicated. Since the differential equation $dX_t=b(X_t)dt$ has an unique solution, provided it  passes through a given point at given time.   For  $b(X_t)=0$,   the final transition path is  $X_t=X_f,~t^*\leq t\leq T_f$.

The difference scheme we used in ($\ref{initialtransition}$) and ($\ref{finaltransition}$) is consistent with the It\^{o} interpretation. The differences between It\^{o} interpretation and other stochastic interpretations could be found in $\cite{Arenas2012,Moreno2015}$ and references therein.
\subsubsection{Gaussian noise: System with a Brownian motion.}

\par In this case, the most probable transition path is determined by a deterministic integral-differential equation
\begin{equation}\label{ode2}
  \begin{split}
X^m_t-X_{0}-\int_{T_0}^{t}b(X^m_s)ds=\frac{t-T_0}{T_f-T_0}(X_{f}-X_{0}-\int_{T_0}^{T_f}b(X^m_s)ds).
  \end{split}
\end{equation}
with a constraint $|X_{f}-X_{0}-\int_{T_0}^{T_f}b(X^m_s)ds|\leq \inf_{U_t\in D_{X_0}^{X_f}}|X_{f}-X_{0}-\int_{T_0}^{T_f}b(U_s)ds|$ which has been discussed in Remark $\ref{remark5}$.

\section{Higher Dimensional Cases}
In this section, we discuss the higher dimensional cases.
We consider an SDE system with non-Gaussian noise
{\small\begin{equation}\label{kdimension1}
\left\{
\begin{array}{l}
dX_{1,t}=b_1(X_{1,t},X_{2,t},\cdots,X_{k,t})dt+dL_{1,t},~X_{1,T_0}=X_{1,0},\\
dX_{2,t}=b_2(X_{1,t},X_{2,t},\cdots,X_{k,t})dt+dL_{2,t},~X_{2,T_0}=X_{2,0},\\
~~~~\vdots~~~~~~~~~~~~~\vdots~~~~~~~~~~~~~~~~~~~~~~~~~~~~~~~~~~~~~~~~~~~~~~~\vdots\\
dX_{k,t}=b_k(X_{1,t},X_{2,t},\cdots,X_{k,t})dt+dL_{k,t},~X_{k,T_0}=X_{k,0},\\
\end{array}\right.
\triangleq dX_t=b(X_t)dt+dL_t,~X_{T_0}=X_0,
\end{equation}}
where $L_{i,t}$ are symmetric $\alpha$-stable L\'{e}vy noises ($0<\alpha<1$)  and $\{L_{i,t},\cdots,L_{j,t}\}$ are independent, and an SDE system with Gaussian noise
{\small\begin{equation}\label{kdimension2}
\left\{
\begin{array}{l}
dX_{1,t}=b_1(X_{1,t},X_{2,t},\cdots,X_{k,t})dt+dB_{1,t},~X_{1,T_0}=X_{1,0},\\
dX_{2,t}=b_2(X_{1,t},X_{2,t},\cdots,X_{k,t})dt+dB_{2,t},~X_{2,T_0}=X_{2,0},\\
~~~~\vdots~~~~~~~~~~~~~\vdots~~~~~~~~~~~~~~~~~~~~~~~~~~~~~~~~~~~~~~~~~~~~~~~\vdots\\
dX_{k,t}=b_k(X_{1,t},X_{2,t},\cdots,X_{k,t})dt+dB_{k,t},~X_{k,T_0}=X_{k,0},\\
\end{array}\right.
\triangleq dX_t=b(X_t)dt+dB_t,~X_{T_0}=X_0,
\end{equation}}
where $B_{i,t}$ are Brownian motions, and $\{B_{1,t},\cdots,B_{k,t}\}$ are independent.

It was known  $\cite{Ash1999}$ that the random variables $X_1,\cdots,X_k$ are independent if and only if  $f(x_1,\cdots,x_k)=f_1(x_1)\cdots f_k(x_k)$ for all $(x_1,\cdots,x_n)$ except possibly for a Borel subset of $\mathbb{R}^k$ with Lebesgue measure zero. Here $f$ is the probability density of $(X_1,\cdots,X_k)$ and $f_i$ is the probability density of $X_i$ ($i=1,\cdots,k$). Hence by the independence of the noises, the probability density function denoted by $f^k_{\alpha}(x)$ of k-dimensional $\alpha$-stable L\'{e}vy variable $x=(x_{1},x_{2},\cdots,x_{k})\in\mathbb{R}^k$ is
\begin{equation}
\begin{split}
f^k_{\alpha}(x)&=f_{\alpha}(x_1)f_{\alpha}(x_2)\cdots f_{\alpha}(x_k)\\
&=\exp\{-[-\sum_{i=1}^{k}\ln(f_{\alpha}(x_i))]\}\\
&=\exp\{-\sum_{i=1}^{k}\theta_1^{\alpha}(x_i)\}.\\
\end{split}
\end{equation}

Recall in Theorem $\ref{mtheorem}$, we proved that the most probable transition path is supposed to be monotonic  with respect to time $t$. In higher dimensional cases, the transition probability density function has the similar form of ($\ref{pdf}$). It implies that every component of the most probable transition path are monotonic with respect to time $t$.

For the system ($\ref{kdimension1}$) with zero drift term, the time partition $\{T_0=t_0<t_1<\cdots<t_n=T_f,t_{i+1}-t_i=\frac{T_f-T_0}{n}\}$, and for a path $x_t=(x_{1,t},x_{2,t},\cdots,x_{k,t})$ connecting  $(T_0,X_{0})$ and $(T_f,X_{f})$ monotonically, we denote $x_{i,t_{j+1}}-x_{i,t_{j}}=\lambda^i_j(X_{i,f}-X_{i,0})$.  Thus
\begin{equation}
\begin{split}
\mathcal{S}_n(x_t)&=\sum_{i=1}^{k}\sum_{j=0}^{n-1}\theta^{\alpha}_{\epsilon}(x_{i,t_{j+1}}-x_{i,t_j})-kn\theta^{\alpha}_{\epsilon}(0)\\
&=\sum_{i=1}^{k}\sum_{j=0}^{n-1}[\theta^{\alpha}_{1}(\lambda^i_jn^{1/\alpha}\frac{X_{i,f}-X_{i,0}}{(T_f-T_0)^{1/\alpha}})-\theta^{\alpha}_{\epsilon}(0)]\\
&\geq\sum_{i=1}^{k}\sum_{\lambda^i_jn^{1/\alpha}\frac{X_{i,f}-X_{i,0}}{(T_f-T_0)^{1/\alpha}}\geq r_i}[\theta^{\alpha}_{1}(\lambda^i_jn^{1/\alpha}\frac{X_{i,f}-X_{i,0}}{(T_f-T_0)^{1/\alpha}})-\theta^{\alpha}_{\epsilon}(0)]\\
&\geq\sum_{i=1}^{k}\sum_{\lambda^i_jn^{1/\alpha}\frac{X_{i,f}-X_{i,0}}{(T_f-T_0)^{1/\alpha}}\geq r_i}\lambda^i_j[\theta^{\alpha}_{1}(n^{1/\alpha}\frac{X_{i,f}-X_{i,0}}{(T_f-T_0)^{1/\alpha}})-\theta^{\alpha}_{\epsilon}(0)]\\
&=\sum_{i=1}^{k}[\theta^{\alpha}_{1}(n^{1/\alpha}\frac{X_{i,f}-X_{i,0}}{(T_f-T_0)^{1/\alpha}})-\theta^{\alpha}_{\epsilon}(0)]\sum_{\lambda^i_jn^{1/\alpha}\frac{X_{i,f}-X_{i,0}}{(T_f-T_0)^{1/\alpha}}\geq r_i}\lambda^i_j\\
&\geq\sum_{i=1}^{k}[\theta^{\alpha}_{1}(n^{1/\alpha}\frac{X_{i,f}-X_{i,0}}{(T_f-T_0)^{1/\alpha}})-\theta^{\alpha}_{\epsilon}(0)](n\rightarrow\infty).
\end{split}
\end{equation}
This implies   the results of Corollary $\ref{lemmaconvex}$ and Theorem $\ref{theoremconcave}$ in higher dimensional cases and they are similar to 1-dimensional case.

For the system ($\ref{kdimension2}$) with zero drift term,
\begin{equation}
\begin{split}
\mathcal{S}_n(x_t)&=\sum_{i=1}^{k}\sum_{j=0}^{n-1}\theta^b_{\epsilon}(x_{i,t_{j+1}}-x_{i,t_j})\\
&=\sum_{i=1}^{k}\sum_{j=0}^{n-1}\theta^b_{\epsilon}(|x_{i,{t_{j+1}}}-x_{i,t_j}|)\\
&\geq \sum_{i=1}^{k}n\theta^b_{\epsilon}(\sum_{j=0}^{n-1}\frac{1}{n}|x_{i,{t_{j+1}}}-x_{i,t_j}|)\\
&=\sum_{i=1}^{k}n\theta^b_{\epsilon}(\frac{1}{n}|X_{i,f}-X_{i,0}|).
\end{split}
\end{equation}
This implies   the results of Corollary $\ref{lemmaconvex}$ and Theorem $\ref{theoremconcave}$ in higher dimensional cases  and they are similar to 1-dimensional case.

\section{Examples}

Let us consider several examples in order to illustrate our results.
\begin{myexample}
Ornstein-Uhlenbeck process
\end{myexample}
\noindent Consider a linear scalar SDE:
$$dX_t=rdt+dL_t,~T_0\leq t\leq T_f,~X_{T_0}=X_{0},~X_{T_f}=X_{f},$$
with $r$ a constant.  Let $Y_t=X_t-r(t-T_0)$. Then by $It\hat{o}$ formula
\begin{align*}
dY_t&=-rdt+dX_t=dL_t.
\end{align*}

When $L_t$ is a symmetric $\alpha$-stable L\'{e}vy motion with $0<\alpha<1$, the most probable transition path of $Y_t$ is
\begin{equation*}
Y^m_t=\begin{cases}
 X_f-r(T_f-T_0),~~T_f\geq t\geq t^*~( t>t^*),\\
 X_0,~~T_0\leq t<t^*~(t\leq t^*),\\
  \end{cases}
\end{equation*}
for every $t^*$ satisfying  $T_0\leq t^*\leq T_f$. Thus the most probable transition path for $X_t$ is
\begin{equation*}
X^m_t=\begin{cases}
 X_{f}-r(T_f-t),~~t\geq t^*~(t>t^*),\\
 X_{0}+r(t-T_0),~~t<t^*~(t\leq t^*),\\
  \end{cases}
\end{equation*}
where $t^*$ can be chosen arbitrarily in $[T_0,T_f]$.

When $L_t$ is replaced by a Brownian motion,
$$dX_t=rdt+dB_t,~T_0\leq t\leq T_f,~X_{T_0}=X_{0},~X_{T_f}=X_{f},$$
the most probable transition path of $X_t$ is (by Theorem $\ref{theoremconcave}$),
\begin{align*}
X^m_t-r(t-T_0)&=X_{0}+\frac{t-T_0}{T_f-T_0}(X_{f}-X_{0}-r(T_f-T_0))\\
\Leftrightarrow X^m_t &=X_{0}+\frac{t-T_0}{T_f-T_0}(X_{f}-X_{0}).
\end{align*}
So in this linear system with drift and Gaussian noise, the  most probable transition path is also a line segment.

Figure $\ref{lineargraph}$ shows the most probable transition paths of this example.
\begin{figure}
  \centering
  \includegraphics[width=8cm,height=6cm]{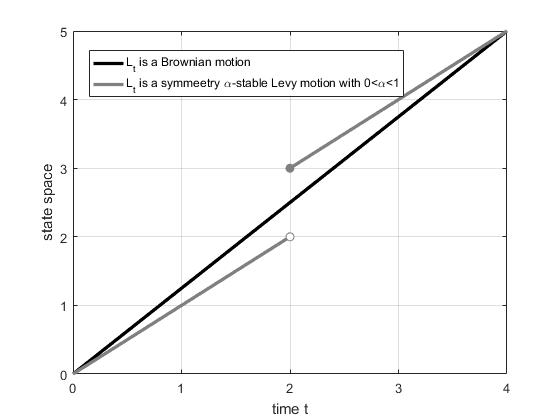}\\
  \caption{The most probable transition paths of Ornstein-Uhlenbeck process in Example 1 with $T_0=0,~T_f=4,~r=1,~X_{0}=0,~X_{f}=5$. We choose $t^*=2$ and the  `right-continuous with left limit' version of the most probable transition path.}\label{lineargraph}
\end{figure}

\begin{myexample}
Geometric Brownian motion
\end{myexample}
\noindent Consider a linear scalar SDE with multiplicative noise
$$dX_t=rX_tdt+\eta X_tdB_t,$$
where $r$ and $\eta$ are real constants, and $X_t>0$, $a.s.$.  Setting  $Y_t=\ln X_t-(r-\frac{1}{2}\eta^2)t$ and  applying It\^{o} formula, we obtain
\begin{align*}
dY_t&=d\ln(X_t)-(r-\frac{1}{2}\eta^2)dt\\
&=\frac{1}{X_t}dX_t+\frac{1}{2}(-\frac{1}{X_t^2})(dX_t)^2-(r-\frac{1}{2}\eta^2)dt\\
&=\frac{dX_t}{X_t}-\frac{1}{2}\eta^2dt-(r-\frac{1}{2}\eta^2)dt\\
&=\eta dB_t.
\end{align*}

By Corollary $\ref{lemmaconvex}$,
\begin{align*}
Y^m_t=Y_0+\frac{t-T_0}{T_f-T_0}(Y_f-Y_0).
\end{align*}
Thus
\begin{align*}
X^m_t=\exp\{(r-\frac{1}{2}\eta^2)t+\ln X_{0}+\frac{t-T_0}{T_f-T_0}[\ln X_{f}-(r-\frac{1}{2}\eta^2)T_f-\ln X_{0}]\}.
\end{align*}

Figure $\ref{geograph}$ shows the most probable transition path of this example.

\begin{myexample}
Geometric L\'{e}vy process
\end{myexample}
\noindent Consider the stochastic differential equation
$$dX_t=X_{t}[\zeta dt+\beta dB(t)+\int_{\mathbb{R}}\gamma(t,z)\bar{N}(dt,dz)],$$
where $\zeta,~\beta$ are constants and $\gamma(t,z)\geq-1$, and
\begin{equation*}
  \bar{N}(dt,dz)=\begin{cases}
 N(dt,dz)-\nu(dz)dt,~if~|z|<r,\\
 N(dt,dz),~if~|z|\geq r,\\
  \end{cases}
\end{equation*}
$r\in\mathbb{R}+$.  For simplicity we set $\beta=0,~ \gamma(t,z)=e^z-1$. Now define $Y_t=\ln X_t$. By It\^{o} formula, we have
\begin{align*}
dY_t&=\zeta dt+\int_{|z|<r}\{\ln(1+e^z-1)-(e^z-1)\}\nu(dz)dt+\int_{\mathbb{R}}\ln(1+e^z-1)\bar{N}(dt,dz)\\
&=\zeta dt+\int_{|z|<r}\{z-(e^z-1)\}\nu(dz)dt+\int_{\mathbb{R}}z\bar{N}(dt,dz).
\end{align*}

Let $U_t=Y_t-(\zeta +\int_{|z|<r}\{z-(e^z-1)\}\nu(dz))t$ be a symmetric $\alpha$-stable L\'{e}vy process, i.e., $dU_t=\int_{r}z\bar{N}(dt,dz)=dL^{\alpha}_t$.

Let the jump measure $\nu(dz)$ be the jump measure of an $\alpha$-stable L\'{e}vy process,  that is, $\nu(dz)=c_{\alpha}\frac{dz}{|z|^{1+\alpha}}$ with
$$c_{\alpha}=\frac{\alpha}{2^{1-\alpha}\sqrt{\pi}}\frac{\Gamma(\frac{1+\alpha}{2})}{\Gamma(1-\frac{\alpha}{2})}.$$

For $\alpha$ with  $0<\alpha<1$ and $~r=1$,
$$\int_{|z|<1}\{z-(e^z-1)\}\nu(dz)<\infty. $$
By Corollary $\ref{lemmaconvex}$,  the most probable transition path of $U_t$ is
\begin{equation*}
  U^m_t=\begin{cases}
 U_0,~T_0\leq t<t^*~(T_0\leq t\leq t^*),\\
 U_f,~t^*\leq t\leq T_f~(t^*< t\leq T_f),
  \end{cases}
\end{equation*}
where $t^* \in[T_0,T_f]$. Thus
\begin{equation*}
  X^m_t=\begin{cases}
 \exp(\ln X_{0}+(t-T_0)[\zeta+\int_{|z|<1}\{z-(e^z-1)\}\nu(dz)]),~T_0\leq t<t^*~(T_0\leq t\leq t^*),\\
 \exp(\ln X_{f}+(t-T_f)[\zeta+\int_{|z|<1}\{z-(e^z-1)\}\nu(dz)]),~t^*\leq t\leq T_f~(t^*< t\leq T_f).\\
  \end{cases}
\end{equation*}

Figure $\ref{geograph}$ also shows the most probable transition path of this example.
\begin{figure}
 \centering
 \subfigure[]{
 \includegraphics[width=7cm,height=6cm]{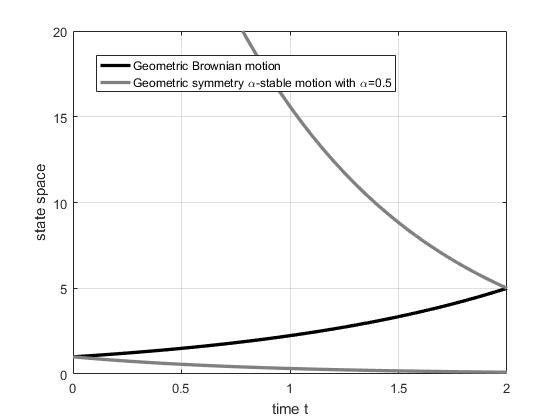}
 }
 \subfigure[]{
 \includegraphics[width=7cm,height=6cm]{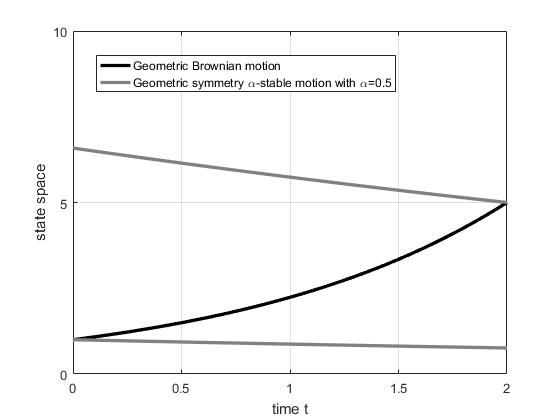}
 }
 \subfigure[]{
 \includegraphics[width=7cm,height=6cm]{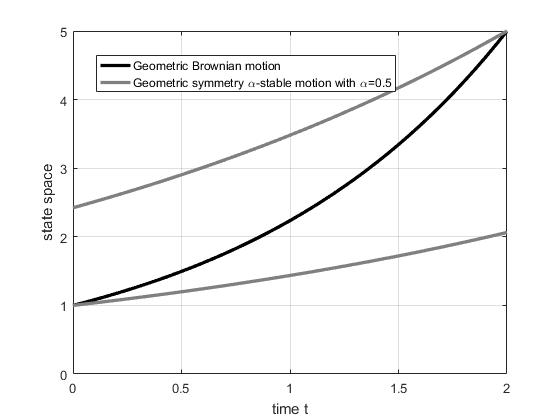}
 }
 \subfigure[]{
 \includegraphics[width=7cm,height=6cm]{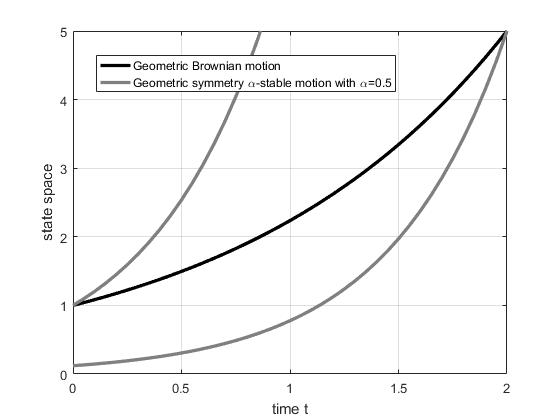}
 }
 \caption{The most probable transition paths of Geometric Brownian motion (black solid line) and the initial and final transition path of geometry symmetric $\alpha$-stable L\'{e}vy motion with $\alpha=0.5$ (grey solid lines, the most probable path starts at initial transition path and jumps to final transition path at time $t^*=1$),   for $T_0=0,~T_f=2,~X_{0}=1,~X_{2}=5,~r=1,~\eta=1,~\beta=0$:   $(a)~\zeta=-1;(b)~\zeta=0;(c)~\zeta=0.5;(d)~\zeta=2.$}\label{geograph}
 \end{figure}

\begin{myexample}
One-dimensional Nonlinear SDE:  Stochastic double-well system
\end{myexample}
\noindent Consider the stochastic double-well system
\begin{align*}
dX_t=(X_t-X_t^3)dt+dL_t,
\end{align*}
where  $L_t$  is  a symmetric $\alpha$-stable L\'{e}vy motion with $0<\alpha<1$.

The corresponding undisturbed system has three equilibrium points: -1, 0, 1 ( -1 and 1 are stable equilibrium points,  0 is an unstable equilibrium point).

By Theorem $\ref{theoremconcave}$, the most probable transition path  of this system is described by the following deterministic differential equation:
\begin{equation*}
\begin{cases}
dX^m_{t}-(X^m_t-(X^m_t)^3)dt=0,~t\in [T_0,T_f]\backslash\{t^*\},\\
X^m_{T_0}=X_{0},~X^m_{T_f}=X_{f},~t^*\in[T_0,T_f]. \\
\end{cases}
\end{equation*}

We compute some solution curves of above system as  shown in Figure $\ref{double}$. The most probable  transition path consists of the solutions of this deterministic system. For instance, if we consider: $X_0=1$, $X_f=-1$, $T_0=0$, $T_f=4$, then the most probable transition path consists of the parts of two straight lines in Figure $\ref{double}$.
\begin{figure}
  \centering
  \includegraphics[width=0.6\textwidth]{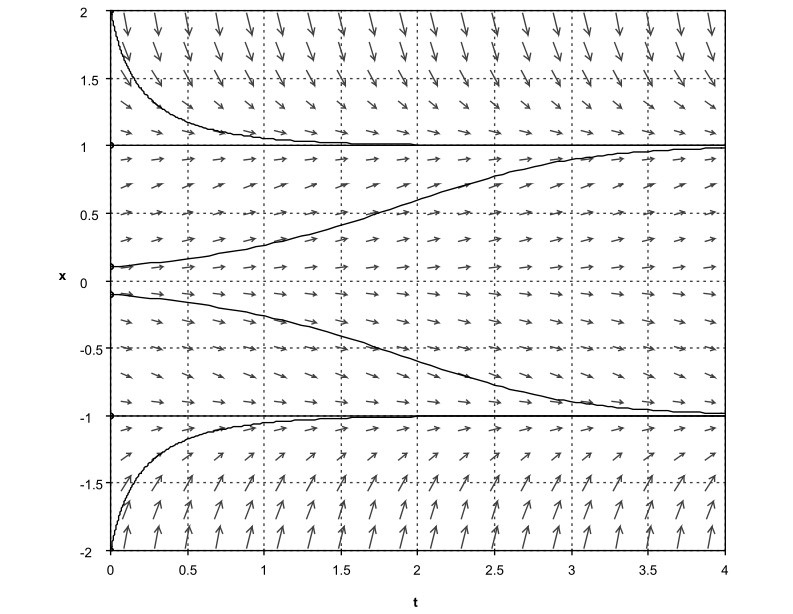}\\
  \caption{Solution curves for $\dot{x}=x-x^3$.}\label{double}
\end{figure}

\begin{myexample}
Two-Dimensional Nonlinear SDE: The Maier-Stein model
\end{myexample}
\noindent Consider the following SDEs:
\begin{equation*}
\begin{cases}
du=(u-u^3-\beta uv^2)dt+dL_{1,t},\\
dv=-(1+u^2)vdt+dL_{2,t},
\end{cases}
\end{equation*}
By Theorem $\ref{theoremconcave}$,
the most probable transition path  $(u^m_t,v^m_t)$ of this  system is described by the following deterministic differential equations:
\begin{equation*}
\begin{cases}
du^m_{t}=(u^m_{t}-(u^m_{t})^3-\beta u^m_{t}(v^m_{t})^2)dt,~t\in [T_0,T_f]\backslash\{t^*\},\\
dv^m_{t}=-(1+(u^m_{t})^2)v^m_{t}dt, ~t\in [T_0,T_f]\backslash\{t^*\},\\
u^m_{T_0}=u_{0},~u^m_{T_f}=u_{f},~v^m_{T_0}=v_{0},~v^m_{T_f}=v_{f},~t^*\in[T_0,T_f].  \\
\end{cases}
\end{equation*}

Figure $\ref{2d}$ shows the phase portrait of this deterministic system.   There are three equilibrium points:  $(-1,0), (0,0), (1,0)$. In Figure $\ref{2d}$ we show several  orbits in black lines.   The most probable transition path can be found by the phase portrait with given initial and terminal conditions.
\begin{figure}
  \centering
  \includegraphics[width=0.6\textwidth]{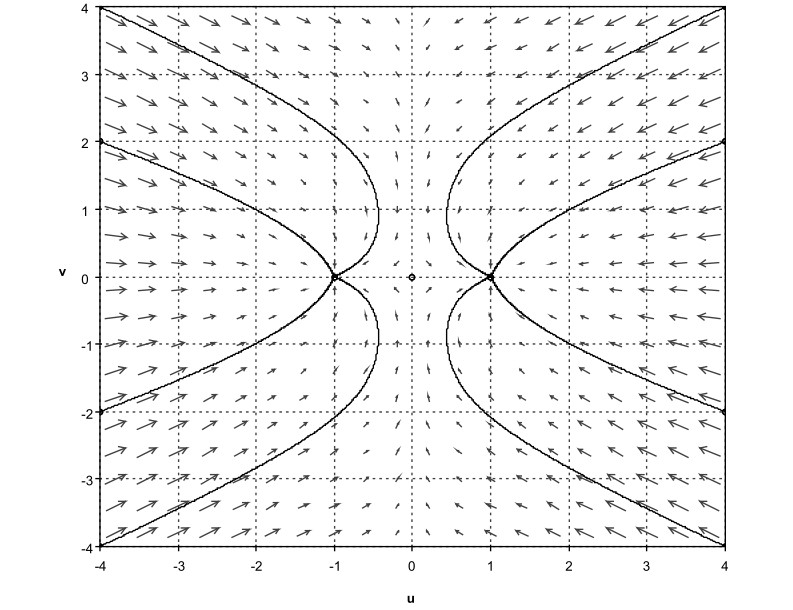}\\
  \caption{Phase portrait for the Maier-Stein model.}\label{2d}
\end{figure}


\section*{Acknowledgements}
The authors would like to thank Professor Xu Sun, Dr Qiao Huang, Dr Yayun Zheng, Dr Wei Wei, Dr Ao Zhang,  and Dr Jianyu Hu for helpful discussions. This work was partly supported by the NSF grant 1620449, and NSFC
grants 11531006 and 11771449.

\section*{References}


\begin{thebibliography}{100}
\bibitem {Durr1978}D. D\"{u}rr and  A. Bach. The Onsager-Machlup Function as Lagrangian
    for the Most Probable Path of a Diffusion Process. \emph{Commun.Math. Phys}. 60: 153 (1978).
\bibitem {Kath1981}K. L. C. Hunt, and J. Ross. Path integral solutions of stochastic equations for nonlinear irreversible processes: The uniqueness of the thermodynamic Lagrangian. \emph{The Journal of Chemical Physics}. 75, 976 (1981).
\bibitem {Machlup1953}S. Machlup and L. Onsager. Fluctuations and Irreversible Processes. \emph{Phys. Rev}. 91, 1512 (1953).
\bibitem {Faccioli2006}P. Faccioli, M. Sega, F. Pederiva, and H. Orland. Dominant Pathways in Protein Folding. \emph{Phys.Rev. Lett}. 97, 108101 (2006).
\bibitem {Zuckerman2000}D. M. Zuckerman and T. B. Woolf. Efficient dynamic importance sampling of rare events in one dimension. \emph{Phys. Rev. E.} 63, 016702 (2000).
\bibitem {Wang2006}J. Wang, K. Zhang, H. Lu, and E. Wang. Dominant Kinetic Paths on Biomolecular Binding-Folding Energy Landscape. \emph{Phys. Rev. Lett}. 96, 168101 (2006).
\bibitem {Gobbo2012}G. Gobbo, A. Laio, A. Maleki, and S. Baroni. Absolute Transition Rates for Rare Events from Dynamical Decoupling of Reaction Variables. \emph{Phys. Rev. Lett}. 109, 150601 (2012).
\bibitem {Langouche1982}F. Langouche, D. Roekaerts and E. Tirapegui, Functional Intrgration and Semiclassical Expansions (Springer, The Netherlands) 1982.
\bibitem {Schulman1981} L.S. Schulman. Techniques and Applications of Path Integration (Wiley,New York).(1981).
\bibitem {Wiegel1986} F.W. Wiegel. Introduction to path integral methods in physics and polymer science (World Scientific,Singapore) (1986).
\bibitem {Wio2013}H. S. Wio. Path Integrals for Stochastic Processes: An Introduction. World Scientific, New York (2013).
\bibitem {Khandekar2000}D.C. Khandekar, S.V. Lawande, K.V. Baghwat. Path integral methods and their applications (World Scientific, Singapore) (2000).
\bibitem {Tang2014}Y. Tang, R. Yuan, P. Ao. Summing over trajectories of stochastic dynamics with multiplicative noise. \emph{J. Chem. Phys.} 141, Article 044125 (2014).
\bibitem {Fey1965}R. P. Feynman and A. R. Hibbs. Quantum Mechanics and Path Integrals (New York: McGraw-Hill) (1965).
\bibitem {Arenas2010}Z. G. Arenas and D. G. Barci. Functional integral approach for multiplicative stochastic processes. \emph{Phys. Rev. E.} 81, 051113 (2010).
\bibitem {Lau2007}A. W. C. Lau and T. C. Lubensky. State-dependent diffusion: Thermodynamic consistency and its path integral formulation. \emph{Phys. Rev. E.} 76,011123 (2007).
\bibitem {Wio1999} H.S. Wio, Application of Path Integration to Stochastic Processes. An Introduction, chapter in. Fundamentals and Applications of Complex Systems, Ed. G. Zgrablich, (Nueva Edit. Univ., Univ.Nac. San Luis), pg. 253. (1999).
\bibitem {laskin2002} N. Laskin. Fractional quantum mechanics. \emph{Phys. Rev. E}. 66, 056108 (2002).
\bibitem {Jana2012}D. Janakiraman and K. L. Sebastian, Path-integral formulation for L¨¦vy flights: Evaluation of the propagator for free, linear, and harmonic potentials in the over- and underdamped limits, \emph{Phys. Rev. E}. 86, 061105 (2012).
\bibitem {Applebaum}D. Applebaum. L¨¦vy Processes and Stochastic Calculus (Cambridge: Cambridge University Press) (2009).
\bibitem {Sato1999}K.-I. Sato. L¨¦vy Processes and Infinite Divisibility. Cambridge University Press (1999).
\bibitem {Niels2012}N. Jacob, V. Knopova, S. Landwehr, and R. Schilling. A geometric
    interpretation of the transition density of a symmetric L\'{e}vy process. \emph{Sci. China Math}.
    55(6): 1099¨C1126 (2012).
\bibitem {Jacob2001} N. Jacob. Pseudo-Differential Operators and Markov Processes, vol.1: Fourier Analysis and Semigroups. London: Imperial College Press (2001).
\bibitem {Duan2015}J. Duan. An Introduction to Stochastic Dynamics (New York: Cambridge University Press) (2015).
\bibitem {Janicki1994} A. Janicki and  A. Weron. Simulation and Chaotic Behavior of alpha-Stable Stochastic Processes. New York: Marcel Dekker (1994).
\bibitem {Shao1993}M. Shao and C. L. Nikias. Signal processing with fractional lower order moments: Stable processes and their applications. \emph{Proc. of the IEEE}. 81(7): 986-1010 (1993).
\bibitem {Albeverio2010}S. Albeverio, Z. Brzezniak and J. Wu. Existence of global solutions and invariant measures for stochastic differential equations driven by poisson type noise with non-lipschitz coefficients
    \emph{Journal of Mathematical Analysis and Applications}. 371(1) 309-322 (2010).
\bibitem {Xi2018}F. Xi, C. Zhu. Jump type stochastic differential equations with non-Lipschitz coefficients: Non-confluence, Feller and strong Feller properties, and exponential ergodicity. \emph{J. Differential Equations}(2018), https://doi.org/10.1016/j.jde.2018.10.006.
\bibitem {Ash1999}R. B. Ash. Probability and Measure Theory (A Harcourt Science and Technology Company press) (1999).
\bibitem {Delarue2017}M. Delarue, P. Koehl, and H. Orland. Ab initio sampling of transition paths byconditioned Langevin dynamics. \emph{J. Chem. Phys.} 147, 152703 (2017).
\bibitem {zheng2017}Y. Zheng and X. Sun. Governing equations for Probability densities of stochastic differential equations with discrete time delays. \emph{Discrete \& Continuous Dynamical Systems-B}. 22 (9) : 3615-3628. (2017).
\bibitem {Arenas2012}Z. G. Arenas and D. G. Barci. Hidden symmetries and equilibrium properties of multiplicative white-noise stochastic processes. \emph{J. Stat. Mech.} P12005 (2012)
\bibitem {Moreno2015}M. V. Moreno, Z. G. Arenas, and D. G. Barci. Langevin dynamics for vector variables driven by multiplicative white noise: A functional formalism. \emph{Phys. Rev. E}.  91, 042103 (2015).

\end{thebibliography}

\end{document}